\renewcommand{\theequation}                            
       {\mbox{\arabic{section}.\arabic{equation}}}
\newcommand{\origsetminus}{} \let\origsetminus=\setminus           
\renewcommand{\setminus}{\!\origsetminus\!}
\theoremstyle{plain}
\newtheorem{definition}{Definition}[section]
\newtheorem{lemma}[definition]{Lemma}
\newtheorem{theorem}[definition]{Theorem}
\newtheorem{corollary}[definition]{Corollary}
\theoremstyle{definition}
\newtheorem{example}[definition]{Example}
\newtheorem{remark}[definition]{Remark}
\renewcommand{\mathbb}{\mathbbm}                     
\renewcommand{\epsilon}{\varepsilon}                 
\renewcommand{\phi}{\varphi}
\renewcommand{\theta}{\vartheta}
\renewcommand{\le}{\leqslant}
\renewcommand{\ge}{\geqslant}
\newcommand{\origfoo}{} \let\origfoo=\sqrt           
\renewcommand{\sqrt}[1]{\origfoo{#1}\;}
\renewcommand{\O}{{\mathcal O}}                      
\newcommand{\abs}[1]{\left\lvert #1 \right\rvert}    
\newcommand{\norm}[1]{\left\lVert #1 \right\rVert}   
\DeclareMathOperator{\R}{{\mathbb R}}                
\DeclareMathOperator{\C}{{\mathbb C}}                
\DeclareMathOperator{\N}{{\mathbb N}}                
\DeclareMathOperator{\Id}{ Id}                        
\newcommand{\A}{{\mathcal A}}
\DeclareMathOperator{\Borel}{{\mathcal B}}
\newcommand{\scapro}[2]{\langle #1,#2\rangle}       
\DeclareMathOperator{\1}{\mathbbm 1}
\renewcommand{\S}{{\mathcal S}}
\DeclareMathOperator{\Z}{{\mathcal Z}} \DeclareMathOperator{\Cc}{{\mathcal C}}
\newcounter{zahl}
\DeclareMathOperator{\fa}{\qquad\text{for all }}
\title{Infinitely divisible cylindrical measures \\on Banach spaces}
\author{ Markus Riedle\\
Department of Mathematics\\
King's College London\\
London WC2R 2LS\\
United Kingdom \\
\texttt{markus.riedle@kcl.ac.uk}}
\begin{document}
\maketitle

\begin{abstract}
In this work infinitely divisible cylindrical probability measures on arbitrary Banach
spaces are introduced. The class of infinitely divisible cylindrical probability measures
is described in terms of their characteristics, a characterisation which is not known in
general for infinitely divisible Radon measures on Banach spaces. Further properties of
infinitely divisible cylindrical measures such as continuity are derived. Moreover, the
result on the classification enables us to conclude new results on genuine L{\'e}vy
measures on Banach spaces.
\end{abstract}
{\bf Keywords:} infinitely divisible measure, L{\'e}vy measure, cylindrical measure,
cylindrical random variable.\\
{\bf Mathematics Subject Classification (2000):} 46G12, 46B09, 60B11, 60G20.\\

\section{Introduction}

Probability theory in Banach spaces has been extensively studied since 1960 and several
monographs are dedicated to this field of mathematics, e.g. de Araujo and Gin{\'e}
\cite{AraujoGine}, Ledoux and Talagrand \cite{LedTal} and Vakhania et al \cite{Vaketal}.
This field of mathematics is closely related to the theory of Banach space geometry and
it has applications not only in probability theory but also in operator theory, harmonic
analysis and C$^\ast$-algebras.

Cylindrical stochastic processes in Banach spaces appear naturally as the driving noise
in stochastic differential equations in infinite dimensions, such as stochastic partial
differential equations and interest rate models. Up to now, cylindrical Wiener processes
are the standard examples of the driving noise, which restricts the noise to a Gaussian
perturbation with continuous paths.  A natural non-Gaussian and discontinuous
generalisation is introduced by {\em cylindrical L{\'e}vy processes}. The notion
cylindrical L{\'e}vy process appears the first time in Peszat and Zabczyk \cite{PesZab}
and it is followed by the works Brze\'zniak, Goldys et al \cite{Zdzisetal}, Brze\'zniak
and Zabzcyk \cite{BrzZab} and Priola and Zabczyk \cite{PriolaJerzy2}. The first
systematic introduction of cylindrical L{\'e}vy processes appears in our work Applebaum
and Riedle \cite{DaveMarkus}.

The introduction of cylindrical L{\'e}vy processes in \cite{DaveMarkus}  are based on the theory of cylindrical or generalised processes and cylindrical measures, see for example Schwartz \cite{Schwartz} or Vakhaniya et al \cite{Vaketal}. This approach
in \cite{DaveMarkus} is inspired by the analogue definition for cylindrical Wiener processes, see Kallianpur and  Xiong \cite{Kallianpur}, Metivier and Pellaumail \cite{MetivierPell} or Riedle \cite{riedlewiener}. In the same way as cylindrical Wiener processes are related to the class of Gaussian cylindrical measures, the introduction of cylindrical L{\'e}vy processes in \cite{DaveMarkus} leads to the new class of infinitely divisible cylindrical measures which have not been considered so far. Since the article \cite{DaveMarkus}  is focused on cylindrical L{\'e}vy processes and their stochastic integral, no further properties of infinitely divisible cylindrical measures are derived. In this work we give a rigorous introduction of
infinitely divisible cylindrical measures in Banach spaces and derive some fundamental properties of them.
Some of the results also give a new insight on genuine infinitely divisible Radon
measures on Banach spaces.

The main result is the characterisation of the class of infinitely divisible cylindrical measures in a Banach space in terms
of a triplet $(p,q,\nu)$ where $p,q$ are some functions and $\nu$ is a cylindrical measure. This result is surprising
since for infinitely divisible Radon measures in Banach spaces such a classification is not known in general, see de Araujo and Gin\'e \cite{AraujoGine}. Furthermore, since in analogy to the characteristics of L{\'e}vy processes the triplet describes the deterministic drift, covariance structure of the Gaussian part and jump distribution it provides the construction of an infinitely divisible cylindrical random variable for given data specifying these properties. Moreover, this main result enables us to derive the following two important conclusions.

The first one concerns the following problem: even in the finite dimensional
case,  a probability measure on $\R^2$ which satisfies that all image measures under
linear projections to $\R$ are infinitely divisible might be not infinitely divisible,
see Gin{\'e} and Hahn \cite{GineHahn} and Marcus \cite{Marcus}. However, a question left
open is if a probability measure on an infinite dimensional space is infinitely divisible
under the condition that all linear projections to $\R^n$ for all finite dimensions
$n\in\N$ are infinitely divisible? By the characterisation of the set of infinitely
divisible cylindrical measures mentioned above we are able to answer this question
affirmative.

The second conclusion of our main result concerns the characterisation of L{\'e}vy measures in Banach spaces.
In a Hilbert space $H$ it is well known that a $\sigma$-finite measure $\nu$ is the L{\'e}vy measure of an infinitely divisible
Radon measure if and only if
\begin{align}\label{eq.intro}
  \int_H \Big(\norm{u}^2\wedge 1\Big)\,\nu(du)<\infty,
\end{align}
see for example Parthasarathy \cite{Parth67}. Although this integrability condition can
be used to classify the type and cotype of Banach spaces, see de Araujo and Gin{\'e}
\cite{AraujoGine2}, in general Banach spaces such an explicit description of infinitely
divisible measures in terms of the L{\'e}vy measure $\nu$ is not known. Even worse, the
condition \eqref{eq.intro} might be neither sufficient nor necessary for a
$\sigma$-finite measure $\nu$ on an arbitrary Banach space $U$ to guarantee that there
exists an infinitely divisible measure with characteristics $(0,0,\nu)$, see for example
$U=C[0,1]$ in  Araujo \cite{Araujo75}. However, we show in the last part of this work
that a $\sigma$-finite measure $\nu$ satisfying the weaker  condition
\begin{align}
  \int_U \Big(\abs{\scapro{u}{a}}^2\wedge 1\Big)\,\nu(du)<\infty \fa a\in
  U^\ast,
\end{align}
always generates an infinitely divisible cylindrical measure $\mu$. This result
reduces the question whether a $\sigma$-finite measure $\nu$ generates  an infinitely
divisible Radon measure to the question whether the infinitely divisible cylindrical
measure $\mu$ extends to a Radon measure.

\section{Preliminaries}

For a measure space $(S,\S,\mu)$ we denote by $L^p_\mu(S,\S)$, $p\ge 0$ the space of
equivalence classes of measurable functions $f:S\to\R$ which satisfy $\int
\abs{f(s)}^p\,\mu(ds)<\infty$.

Let $U$ be a Banach space with dual $U^\ast$. The dual pairing is denoted by
$\scapro{u}{a}$ for $u\in U$ and $a\in U^\ast$. The Borel $\sigma$-algebra in $U$ is
denoted by  $\Borel(U)$ and the closed unit ball at the origin by $B_U:=\{u\in U:\,
\norm{u}\le 1\}$.

For every $a_1,\dots, a_n\in U^{\ast}$ and $n\in\N$ we define a linear map
\begin{align*}
  \pi_{a_1,\dots, a_n}:U\to \R^n,\qquad
   \pi_{a_1,\dots, a_n}(u)=(\scapro{u}{a_1},\dots,\scapro{u}{a_n}).
\end{align*}
Let $\Gamma$ be a subset of $U^\ast$. Sets of the form
 \begin{align*}
Z(a_1,\dots ,a_n;B):&= \{u\in U:\, (\scapro{u}{a_1},\dots,
 \scapro{u}{a_n})\in B\}\\
 &= \pi^{-1}_{a_1,\dots, a_n}(B),
\end{align*}
where $a_1,\dots, a_n\in \Gamma$ and $B\in \Borel(\R^n)$ are called {\em
cylindrical sets}. The set of all cylindrical sets is denoted by $\Z(U,\Gamma)$
and it is an algebra. The generated $\sigma$-algebra is denoted by
$\Cc(U,\Gamma)$ and it is called the {\em cylindrical $\sigma$-algebra with
respect to $(U,\Gamma)$}. If $\Gamma=U^\ast$ we write $\Z(U):=\Z(U,\Gamma)$ and
$\Cc(U):=\Cc(U,\Gamma)$.

A function $\mu:\Z(U)\to [0,\infty]$ is called a {\em cylindrical measure on
$\Z(U)$}, if for each finite subset $\Gamma\subseteq U^\ast$ the restriction of
$\mu$ to the $\sigma$-algebra $\Cc(U,\Gamma)$ is a measure. A cylindrical
measure is called finite if $\mu(U)<\infty$ and a cylindrical probability
measure if $\mu(U)=1$.

For every function $f:U\to\C$ which is measurable with respect to
$\Cc(U,\Gamma)$ for a finite subset $\Gamma\subseteq U^\ast$ the integral $\int
f(u)\,\mu(du)$ is well defined as a complex valued Lebesgue integral if it
exists. In particular, the characteristic function $\phi_\mu:U^\ast\to\C$ of a
finite cylindrical measure $\mu$ is defined by
\begin{align*}
 \phi_{\mu}(a):=\int_U e^{i\scapro{u}{a}}\,\mu(du)\qquad\text{for all }a\in
 U^\ast.
\end{align*}
In contrary to classical probability measures on $\Borel(U)$ there exists an analogue of Bochner's theorem for
cylindrical probability measures, see \cite[Prop.VI.3.2]{Vaketal}:  a function
$\phi:U^\ast\to \C$ with $\phi(0)=1$ is the characteristic function of a cylindrical
probability measure if and only if it is positive-definite and continuous on every
finite-dimensional subspace.

For every $a_1,\dots, a_n\in U^{\ast}$ we obtain an image measure
$\mu\circ\pi_{a_1,\dots, a_n}^{-1}$ on $\Borel(\R^n)$. Its characteristic
function $ \phi_{\mu\circ \pi_{a_1,\dots, a_n}^{-1}}$ is determined by that of
$\mu$:
\begin{align}\label{eq.charimcyl}
 \phi_{\mu\circ\pi_{a_1,\dots, a_n}^{-1}}(t)=
  \phi_{\mu}(t_1a_1+\cdots + t_n a_n)
\end{align}
for all $t=(t_1,\dots, t_n)\in \R^n$.

If $\mu_{1}$ and $\mu_{2}$ are cylindrical probability measures on $\Z(U)$ their
convolution is the cylindrical probability measure defined by
$$ (\mu_{1} * \mu_{2})(Z) = \int_{U} \mu_{1}(Z-u)\,\mu_{2}(du),$$ for each $Z \in \Z(U)$. Indeed if $Z =
\pi_{a_1,\dots, a_n}^{-1}(B)$ for some $a_1,\dots, a_n \in U^{\ast}, B \in
{\cal B}(\R^n)$, then it is easily verified that
\begin{equation*}
  (\mu_{1} * \mu_{2})(Z)
=  (\mu_{1} \circ \pi_{a_{a_1,\dots, a_n}}^{-1}) * (\mu_{2} \circ
\pi_{a_{a_1,\dots, a_n}}^{-1})(B).
\end{equation*}
A standard calculation yields $\phi_{\mu_{1} * \mu_{2}} = \phi_{\mu_{1}}\phi_{\mu_{2}}$. For more
information about convolution of cylindrical probability measures, see \cite{Ros}. The
$k$-times convolution of a cylindrical probability measure $\mu$ with itself is denoted
by $\mu^{\ast k}$.

\section{Infinitely divisible cylindrical measures}

For later reference, we begin with the well understood class of infinitely divisible
measures on $\R$. A probability measure $\zeta$ on $\Borel(\R)$ is called {\em infinitely
divisible} if for every $k\in\N$ there exists a probability measure $\zeta_k$ such that
$\zeta=(\zeta_k)^{\ast k}$. It is well known that infinitely divisible probability
measures on $\Borel(\R)$ are characterised by their characteristic function. The
characteristic function is unique but its specific representation depends on the chosen
truncation function.
\begin{definition}\label{de.truncation}
  A truncation function is any measurable function $h:\R\to\R$ which is bounded and satisfies $h=\Id$ in
  a neighborhood $D(h)$ of 0.
\end{definition}
 Given a truncation function $h$ a probability measure $\zeta$ on $\Borel(\R)$ is
infinitely divisible if and only if its characteristic function is of the form
\begin{align}\label{eq.charmuone}
  \phi_\zeta:\R\to\C,
  \qquad\phi_\zeta(t)=\exp\left(im t  -\tfrac{1}{2}r^2 t^2
     +\int_{\R}\widetilde{\psi}_h(s,t)\,\eta(ds)\right)
  \end{align}
for some constants $m\in \R$, $r\ge 0$ and a L{\'e}vy measure $\eta$, which is a
$\sigma$-finite measure $\eta$ on $\Borel(\R)$ with $\eta(\{0\})=0$ and
\begin{align*}
  \int_{\R} \left(\abs{s}^2\wedge 1\right)\,\eta(ds)<\infty.
\end{align*}
The function $\widetilde{\psi}_h$ is defined by
   \begin{align*}
      \widetilde{\psi}_h:\R\times \R\to\C,\qquad
      \widetilde{\psi}_h(s,t):=e^{ist}-1-ith(s).
   \end{align*}
In this situation we call the triplet $(m,r,\eta)_h$ the {\em characteristics of
$\zeta$}. If $h^\prime$ is another truncation function then $(m^\prime,
r,\eta)_{h^\prime}$ is the characteristics of $\zeta$ with respect to $h^\prime$, where
\begin{align*}
  m^\prime:=m +\int_{\R}(h^\prime(s)-h(s))\,\eta(ds).
\end{align*}
The integral exists because $h^\prime(s)-h(s)=0$ for $s\in D(h^\prime)\cap D(h)$ and $h$
and $h^\prime$ are both bounded. From Bochner's theorem and the Schoenberg's
correspondence (see \cite[Ch. IV.1.4]{Vaketal}) it follows that the function
\begin{align*}
  t\mapsto - \int_{\R}\tilde{\psi}_h(s,t)\,\eta(s)
\end{align*}
is negative-definite for all L{\'e}vy measures $\eta$. By choosing $\eta=\delta_{s_0}$, where $\delta_{s_0}$ denotes
the Dirac measure in $s_0$ for a constant $s_0\in\R$, we conclude that
\begin{align}\label{eq.psinegative}
  t\mapsto -\tilde{\psi}_h(s_0,t)\quad
  \text{is negative-definite for all }s_0\in\R.
\end{align}

Now, we move to the general situation of an arbitrary Banach space $U$. A Radon
probability measure $\mu$ on $\Borel(U)$ is called {\em infinitely divisible} if for each
$k\in\N$ there exists a Radon probability measure $\mu_k$ such that $\mu=(\mu_k)^{\ast
k}$. We generalise this definition to cylindrical measures:
\begin{definition}\label{de.infdivcyl}
  A cylindrical probability  measure $\mu$ on $\Z(U)$ is called {\em infinitely divisible}
  if there exists for each $k\in\N$ a cylindrical probability  measure $\mu_k$ such that
  $\mu=(\mu_k)^{\ast k}$.
\end{definition}
Bochner's theorem for cylindrical probability measures,  see for example \cite[Prop.VI.3.2]{Vaketal},
implies that a cylindrical probability  measure $\mu$ on $\Z(U)$ is infinitely divisible
if and only if for every $k\in\N$ there exists a characteristic function $\phi_{\mu_k}$
of a cylindrical probability measure $\mu_k$ such that
\begin{align*}
  \phi_{\mu}(a)=\left(\phi_{\mu_k}(a)\right)^k \fa a\in U^\ast.
\end{align*}
One might conjecture that a cylindrical probabability measure $\mu$ is infinitely
divisible if every image measure $\mu\circ a^{-1}$ is infinitely divisible for all $a\in
U^\ast$. But this is wrong already in the case $U=\R^2$ as shown by Gin{\'e} and Hahn
\cite{GineHahn} and Marcus \cite{Marcus}. They constructed a probability measure $\mu$ on
$\Borel(\R^2)$ such that all projections $\mu\circ a^{-1}$ are infinitely divisible for
all linear functions $a:\R^2\to\R$ but $\mu$ is not infinitely divisible. However, in
infinite dimensions one can require that all finite dimensional projections are
infinitely divisible.
\begin{definition}\label{de.weaklyinfdivcyl}
   A cylindrical probability  measure $\mu$ on $\Z(U)$ is called {\em weakly infinitely divisible}
 if and only if
\begin{align*}
  \mu\circ \pi_{a_1,\dots, a_n}^{-1}
  \text{ is infinitely divisible for all }
   a_1,\dots, a_n\in U^\ast \text{ and }n\in\N.
\end{align*}
\end{definition}
A cylindrical probability measure $\mu$ is weakly infinitely divisible if and only if for
each $k\in\N$ and all $a_1,\dots, a_n\in U^\ast$, $n\in\N$ there exists a characteristic
function $\phi_{\xi_{k,a_1,\dots,a_n}}$ of a probability measure $\xi_{k,a_1,\dots,a_n}$
on $\Borel(\R^n)$ such that
\begin{align}\label{eq.finiteidchar}
  \phi_{\mu\circ \pi_{a_1,\dots, a_n}^{-1}}(t)=\big(\phi_{\xi_{k,a_1,\dots, a_n}}(t)\big)^k
  \fa  t\in\R^n.
\end{align}

It follows that every infinitely divisible cylindrical probability measure $\mu$ is also
weakly infinitely divisible since for each $t=(t_1,\dots, t_n)\in\R^n$ we have
\begin{align*}
  \phi_{\mu\circ\pi_{a_1,\dots, a_n}^{-1}}(t)
  & =\phi_{\mu}(t_1 a_1+\dots +t_n a_n)\\
  &=\big(\phi_{\mu_k}(t_1 a_1+\dots +t_n a_n)\big)^k\\
  &=\left(\phi_{\mu_k\circ\pi_{a_1,\dots, a_n}^{-1}}(t)\right)^k
\end{align*}
for all $a_1,\dots, a_n\in U^\ast$ and all $k\in\N$. We will later see, that the converse
is also true, i.e. that the concepts of Definitions \ref{de.infdivcyl} and
\ref{de.weaklyinfdivcyl} coincide.

If $\mu$ is a weakly infinitely divisible cylindrical measure then $\mu\circ a^{-1}$ is
an infinitely divisible measure in $\Borel(\R)$ and thus,
\begin{align}\label{eq.charone}
  \phi_{\mu}(a)
  &=\phi_{\mu\circ a^{-1}}(1)\notag\\
  &= \exp\left(im_a-\tfrac{1}{2}r_a^2+\int_{\R} \left(e^{i s}-1-i s \1_{B_{\R}}(s)\right)\, \eta_a(ds)\right)
\end{align}
for some constants $m_a\in\R$, $r_a\ge 0$ and a L{\'e}vy measure $\eta_a$ on
$\Borel(\R)$. For infinitely divisible cylindrical measures, this representation can be
significantly improved as we have shown in Applebaum and Riedle \cite{DaveMarkus}. The
same prove establishes the result for weakly infinitely divisible cylindrical measures in
the following theorem.
\begin{theorem}\label{th.cyllevymeasure}
Let $\mu$ be a weakly infinitely divisible cylindrical probability measure on $\Z(U)$.
Then its characteristic function $\phi_\mu:U^\ast\to \C$ is given by
\begin{align}\label{eq.charth}
 & \phi_{\mu}(a)\\
 &\quad  = \exp\left(iw(a)-\tfrac{1}{2}q(a)+\int_U \left(e^{i\scapro{u}{a}}-1-i\scapro{u}{a}
   \1_{B_{\R}}(\scapro{u}{a})\right)   \, \nu (du)\right),\notag
\end{align}
where $w:U^\ast\to\R$ is a mapping, $q:U^\ast\to \R$ is a quadratic form and $\nu$ is a
cylindrical measure on $\Z(U)$ such that $\nu\circ \pi^{-1}_{a_1,\dots, a_n}$ is the
L{\'e}vy measure on $\Borel(\R^n)$ of $\mu\circ\pi_{a_1,\dots,a_n}^{-1}$ for all
$a_1,\dots,a_n\in U^\ast$, $n\in\N$.
\end{theorem}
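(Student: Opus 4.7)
The plan is to read off the triplet $(w,q,\nu)$ from the L\'evy--Khintchine characteristics of every finite-dimensional projection $\mu\circ \pi_{a_1,\dots,a_n}^{-1}$ and then to glue the resulting finite-dimensional L\'evy measures into a single cylindrical measure $\nu$ on $\Z(U)$.

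First I would fix a tuple $\mathbf{a}=(a_1,\dots,a_n)$ in $U^\ast$. By Definition \ref{de.weaklyinfdivcyl}, the image measure $\mu_{\mathbf{a}}:=\mu\circ \pi_{a_1,\dots,a_n}^{-1}$ is an infinitely divisible Borel probability measure on $\R^n$. The finite-dimensional L\'evy--Khintchine theorem, written relative to the coordinate-wise truncation $t\mapsto t\1_{B_{\R}}(t)$, yields a unique triplet $(m_{\mathbf{a}},Q_{\mathbf{a}},\eta_{\mathbf{a}})$ consisting of a drift vector in $\R^n$, a non-negative definite symmetric matrix and a L\'evy measure $\eta_{\mathbf{a}}$ on $\Borel(\R^n)$ that does not charge the origin.

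Next, I set $w(a):=m_{(a)}$ and $q(a):=Q_{(a)}$ for the one-element tuple $(a)$, both well defined by uniqueness of the one-dimensional L\'evy--Khintchine characteristics, and I define $\nu$ on a cylindrical set $Z=\pi_{a_1,\dots,a_n}^{-1}(B)$ by $\nu(Z):=\eta_{\mathbf{a}}(B)$. The decisive point is the following consistency: whenever a linear map $L:\R^n\to\R^m$ satisfies $\pi_{b_1,\dots,b_m}=L\circ\pi_{a_1,\dots,a_n}$, then $\eta_{\mathbf{b}}$ equals the image of $\eta_{\mathbf{a}}$ under $L$ on $\Borel(\R^m\setminus\{0\})$. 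This follows from $\mu_{\mathbf{b}}=\mu_{\mathbf{a}}\circ L^{-1}$ combined with the standard fact that the L\'evy measure of a linear pushforward of an infinitely divisible measure is the pushforward of the L\'evy measure restricted away from $L^{-1}(\{0\})$, together with uniqueness of the characteristic triplet. This identity both makes $\nu$ a well-defined set function on $\Z(U)$ and ensures that its restriction to $\Cc(U,\Gamma)$ for any finite $\Gamma\subset U^\ast$ is a genuine $\sigma$-finite measure, namely $\eta_{\mathbf{a}}$ transported via $\pi_{\mathbf{a}}$; hence $\nu$ is a cylindrical measure on $\Z(U)$.

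With $\nu$ in hand the stated formula is obtained by writing $\phi_{\mu}(a)=\phi_{\mu\circ a^{-1}}(1)$ and applying the one-dimensional L\'evy--Khintchine representation \eqref{eq.charone}; the integral over $\R$ against $\eta_a$ becomes an integral over $U$ against $\nu$, because $\eta_a=\nu\circ a^{-1}$ by construction. That $q$ is a quadratic form on $U^\ast$ is read off the two-dimensional case: the map $(t_1,t_2)\mapsto q(t_1a_1+t_2a_2)$ coincides with the quadratic form $t\mapsto t^\top Q_{(a_1,a_2)}t$ induced by the Gaussian covariance matrix of $\mu_{(a_1,a_2)}$, which gives the parallelogram identity and quadratic homogeneity on every finite-dimensional subspace of $U^\ast$ and hence on $U^\ast$ itself. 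The main obstacle is the gluing step: L\'evy measures are only $\sigma$-finite, and projections via non-injective linear maps collapse mass near the origin onto the forbidden point $\{0\}$, so the consistency has to be verified carefully on the complements of $\{0\}$ before being transferred to all of $\R^n$ by uniqueness of the L\'evy--Khintchine decomposition, exactly as in the infinitely divisible case treated in \cite{DaveMarkus}.
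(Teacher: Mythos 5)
Your overall strategy --- extracting the L\'evy--Khintchine triplet of each finite-dimensional projection $\mu\circ\pi_{a_1,\dots,a_n}^{-1}$ and gluing the resulting L\'evy measures $\eta_{\mathbf{a}}$ into a cylindrical measure --- is exactly the route of the proof the paper relies on; note that the paper gives no proof of its own here but refers to Applebaum and Riedle \cite{DaveMarkus} and asserts that the same argument covers the weakly infinitely divisible case. Your identification of $w$ and $q$, the verification that $q$ is a quadratic form via the two-dimensional Gaussian covariance matrices, and the passage from \eqref{eq.charone} to \eqref{eq.charth} are all sound.

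The gap is in the step you yourself call decisive. The consistency $\eta_{\mathbf{b}}=\eta_{\mathbf{a}}\circ L^{-1}$ holds only on $\Borel(\R^m\setminus\{0\})$, and it genuinely cannot be ``transferred to all of $\R^m$ by uniqueness of the L\'evy--Khintchine decomposition'': uniqueness pins down the L\'evy measure of $\mu_{\mathbf{b}}$, which by definition puts no mass at the origin, whereas the pushforward $\eta_{\mathbf{a}}\circ L^{-1}$ may put positive, even infinite, mass there, namely $\eta_{\mathbf{a}}\big(L^{-1}(\{0\})\setminus\{0\}\big)$. Concretely, take $U=\R^2$ and let $\mu$ be infinitely divisible with L\'evy measure $\eta(dx\,dy)=\1_{(0,1]}(x)\,x^{-2}\,dx\;\delta_0(dy)$, so that $\eta(\R^2)=\infty$ while $\mu\circ (e_2^\ast)^{-1}=\delta_0$ has L\'evy measure $0$. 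The cylinder set $Z=\{u:\scapro{u}{e_2^\ast}=0\}$ equals $(e_2^\ast)^{-1}(\{0\})$ and also $\pi_{e_1^\ast,e_2^\ast}^{-1}(\R\times\{0\})$; your prescription assigns it the two values $0$ and $\infty$, so $\nu$ is not well defined. (The same example shows that the theorem's conclusion can only be read with $\nu\circ\pi_{a_1,\dots,a_n}^{-1}$ identified with the L\'evy measure on $\Borel(\R^n\setminus\{0\})$ --- which is all that formula \eqref{eq.charth} requires, since the integrand vanishes where $\scapro{u}{a}=0$.) To close the gap you must either construct $\nu$ so that its restrictions to the $\sigma$-algebras $\Cc(U,\Gamma)$ are mutually consistent measures, which requires deciding coherently across all finite $\Gamma$ how much mass $\nu$ places on the kernels $\ker\pi_{a_1,\dots,a_n}$, or state and use only the weaker identification away from the origin, as is done in \cite{DaveMarkus}.
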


It is natural to denote the measure $\nu$ appearing in \eqref{eq.charth} as a {\em cylindrical
L{\'e}vy measure} as we do in the following definition. However, it turns out that it is sufficient  to require only
that the image measures under all one-dimensional linear projections to $\R$ are L{\'e}vy measures
and it is not necessary  to consider the image measures under all linear projections to $\R^n$ for all finite dimensions
$n$.
\begin{definition}
  A cylindrical measure $\nu:\Z(U)\to [0,\infty]$ is called a {\em cylindrical L{\'e}vy
  measure} if $\nu\circ a^{-1}$ is a L{\'e}vy measure on $\Borel(\R)$ for all $a\in U^\ast$.
\end{definition}
From \eqref{eq.charth} we can easily derive a representation of the characteristic
function $\phi_{\mu}$ of a weakly infinitely divisible cylindrical probability measure
$\mu$ for an arbitrary truncation function $h$. Since $h=\Id$ on $D(h)$ one can define
\begin{align*}
p:U^\ast\to \R, \quad  p(a):=w(a)
  +\int_{U}\big(h(\scapro{u}{a})-\scapro{u}{a}\1_{B_{\R}}(\scapro{u}{a})\big)\,\nu(du).
\end{align*}
It follows from \eqref{eq.charth} that
\begin{align}\label{eq.charmu}
 \phi_\mu(a)=
  \exp\left(ip(a)-\tfrac{1}{2}q(a)+\int_U \psi_h(\scapro{u}{a})\,
  \nu(du)\right),
\end{align}
where the kernel function $\psi_h$ is defined by
\begin{align*}
  \psi_h:\R\to \C, \qquad \psi_h(t):=e^{it}-1-ih(t)
\end{align*}
for an arbitrary  truncation function $h$.
\begin{definition}\label{de.cylchar}
  Let $h$ be an truncation function and let $\mu$ be a weakly infinitely divisible cylindrical probability measure on
$\Z(U)$ with characteristic function \eqref{eq.charmu}. Then we call the triplet $(p, q,
\nu)_h$ the {\em cylindrical characteristics of $\mu$}.
\end{definition}
Analogously to the one-dimensional situation after Definition \ref{de.truncation} one can
convert the cylindrical characteristcs $(p,q,\nu)_h$ into $(p^\prime,q,\nu)_{h^\prime}$
if $h^\prime$ is another truncation function.

It follows from \eqref{eq.charmu} that the
characteristic function $\phi_{\mu\circ a^{-1}}$ of the probability measure $\mu\circ a^{-1}$ on
$\Borel(\R)$ is for all $t\in\R$ given by
\begin{align}\label{eq.charmua}
  \phi_{\mu\circ a^{-1}}(t)
  &=\phi_{\mu}(at)\notag\\
  &= \exp\left(ip(at)-\tfrac{1}{2}q(a)t^2+\int_{\R} \psi_h(st) \, (\nu\circ a^{-1})(ds)\right).
\end{align}
This representation of $\phi_{\mu\circ a^{-1}}$ does not coincide with the representation
\eqref{eq.charmuone} because the functions $\widetilde{\psi_h}$ and $\psi_h$ do not
coincide. Thus, we can not directly read out the characteristics of $\mu\circ a^{-1}$
from \eqref{eq.charmua}.
\begin{lemma}\label{le.characteristicsmua}
Let $\mu$ be a weakly infinitely divisible cylindrical probability measure on $\Z(U)$
with cylindrical characteristics $(p,q,\nu)_h$ for a truncation function $h$.
 Then $\mu\circ a^{-1}$ has the characteristics $(p(a),q(a),\nu\circ
a^{-1})_h$ for all $a\in U^\ast$.
\end{lemma}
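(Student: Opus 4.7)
\emph{Proof plan.} The strategy is to match two expressions for $\phi_{\mu\circ a^{-1}}$ on $\R$: the one given by \eqref{eq.charmua} in terms of the cylindrical characteristics $(p,q,\nu)_h$ of $\mu$, and the classical L{\'e}vy--Khintchine form \eqref{eq.charmuone} for the infinitely divisible measure $\mu\circ a^{-1}$ on $\Borel(\R)$. By Theorem \ref{th.cyllevymeasure} the L{\'e}vy measure of $\mu\circ a^{-1}$ is already $\nu\circ a^{-1}$; it therefore remains to identify the drift as $p(a)$ and the Gaussian parameter as $q(a)$.

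First I would use $q(at) = t^2 q(a)$ (homogeneity of the quadratic form $q$) together with the algebraic identity
\[
\psi_h(st) = \widetilde{\psi}_h(s,t) + i\bigl(t\, h(s) - h(st)\bigr)
\]
to convert the kernel appearing in \eqref{eq.charmua} into the kernel $\widetilde{\psi}_h$ demanded by \eqref{eq.charmuone}. Since $h$ is bounded and coincides with the identity on a neighborhood $D(h)$ of $0$, the remainder $t\,h(s) - h(st)$ vanishes for $s$ in some (possibly smaller) neighborhood of $0$ and is uniformly bounded elsewhere; as the L{\'e}vy measure $\nu\circ a^{-1}$ is finite outside every neighborhood of $0$, the integral $\int_\R (t\,h(s)-h(st))\,(\nu\circ a^{-1})(ds)$ is well defined. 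Setting
\[
A(t) := p(at) + \int_\R \bigl(t\, h(s) - h(st)\bigr)\,(\nu\circ a^{-1})(ds),
\]
\eqref{eq.charmua} becomes
\[
\phi_{\mu\circ a^{-1}}(t) = \exp\!\left(iA(t) - \tfrac{1}{2} q(a) t^2 + \int_\R \widetilde{\psi}_h(s,t)\,(\nu\circ a^{-1})(ds)\right).
\]

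Comparing this with the L{\'e}vy--Khintchine representation of $\mu\circ a^{-1}$ with characteristics $(m,r,\nu\circ a^{-1})_h$, cancelling the common L{\'e}vy integral and taking the continuous logarithm (both sides being continuous in $t$ and equal to $1$ at $t=0$), I obtain the pointwise identity
\[
iA(t) - \tfrac{1}{2} q(a) t^2 = imt - \tfrac{1}{2} r^2 t^2 \fa t\in\R.
\]
Separating real and imaginary parts yields $r^2 = q(a)$ and $A(t) = mt$ for every $t\in\R$; specialising the latter to $t=1$ gives $m = A(1) = p(a)$, which proves the lemma.

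The only mild obstacle is the kernel change $\psi_h \to \widetilde{\psi}_h$ and the verification that the associated remainder is integrable against the L{\'e}vy measure; the rest amounts to an application of the uniqueness of the L{\'e}vy--Khintchine representation on $\R$.
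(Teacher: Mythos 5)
Your proposal is correct and follows essentially the same route as the paper: the auxiliary function $A(t)$ is exactly the paper's $\widetilde{p}(a,t)$, the kernel identity $\psi_h(st)=\widetilde{\psi}_h(s,t)+i\bigl(t\,h(s)-h(st)\bigr)$ with the same integrability justification converts \eqref{eq.charmua} into the standard form, Theorem \ref{th.cyllevymeasure} identifies the L\'evy measure as $\nu\circ a^{-1}$, and matching the two exponents and evaluating at $t=1$ yields $m=p(a)$ and $r^2=q(a)$. No substantive differences.
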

\begin{proof}
As above we can rewrite the characteristic function $\phi_{\mu\circ a^{-1}}$ in the form \eqref{eq.charmua} for the given
truncation function $h$. In order to write $\phi_{\mu\circ a^{-1}}$ in the standard form \eqref{eq.charmuone},
we introduce the function $\widetilde{p}:U^\ast\times \R\to \R$ defined by
\begin{align*}
 \widetilde{p}(a,t):=
 \begin{cases}  p(at)+ \int_{\R}\big(t\,h(s)-h(st)\big)\,
  (\nu\circ a^{-1})(ds), & \text{if }t\neq 0,\\
  0 ,&\text{if }t=0. \end{cases}
\end{align*}
Note, that the integral is well defined  because for each $t\neq 0$ we have
\begin{align*}
 t\,h(s)-h(st)=0 \qquad\text{for all }s\in D(h)\cap \tfrac{1}{t}D(h)
\end{align*}
and because $h$ is bounded.  By defining the function
\begin{align*}
  \widetilde{\psi}_h:\R\times\R\to \C,
  \qquad \widetilde{\psi}_h(s,t)=e^{ist}-1-it h(s),
\end{align*}
we can rewrite the characteristic function \eqref{eq.charmua} of $\mu\circ a^{-1}$ for
all $t\in\R$:
\begin{align*}
  \phi_{\mu \circ a^{-1}}(t)&=
  \exp\left(i\widetilde{p}(a,t)-\tfrac{1}{2}q(a)t^2 + \int_{\R}\widetilde{\psi}_h(s,t)\,(\nu\circ
  a^{-1})(ds)\right).
\end{align*}
By Theorem \ref{th.cyllevymeasure} the L{\'e}vy measure of the infinitely divisible probability measure $\mu\circ a^{-1}$  is
given by $\nu\circ a^{-1}$ for each $a\in U^\ast$.
Thus, there exist some constants $m_a\in\R$ and $r_a\ge 0$ such that $(m_a,r_a,\nu\circ
a^{-1})_h$ is the characteristics of $\mu\circ a^{-1}$. For all $t\in\R$ it follows that
  \begin{align*}
    \phi_{\mu\circ a^{-1}}(t)
    &= \exp\left(i\widetilde{p}(a,t)-\tfrac{1}{2}q(a) t^2+\int_{\R} \widetilde{\psi}_h(s, t)\, (\nu\circ
    a^{-1})(ds)\right)\\
    &= \exp\left(i m_a t-\tfrac{1}{2}r_a^2t^2+\int_{\R} \widetilde{\psi}_h(s,t)\, (\nu\circ
    a^{-1})(ds)\right),
  \end{align*}
 which results in $\widetilde{p}(a,t)=m_a t=\widetilde{p}(a,1)t=p(a)t$.
 Consequently, we have
\begin{align*}
  \phi_{\mu\circ a^{-1}}(t)
    = \exp\left(i p(a)t-\tfrac{1}{2}q(a)t^2+\int_{\R} \widetilde{\psi}_h(s, t)\, (\nu\circ
    a^{-1})(ds)\right),
\end{align*}
which completes the proof.
\end{proof}

Recalling the L{\'e}vy-Khintchine decomposition for infinitely divisible measures we could expect from \eqref{eq.charmu} that
\begin{align*}
   a\mapsto\exp\left(ip(a)\right),
  \qquad a\mapsto\exp\left(\int_{U} \psi_h(\scapro{u}{a})\,\nu(du)\right)
\end{align*}
are characteristic functions of cylindrical measures on $\Z(U)$, respectively. But the
following example shows that  we can not separate the drift part $p$ and the integral term with respect to the
cylindrical L{\'e}vy measure $\nu$ in order to obtain cylindrical measures.
\begin{example}\label{ex.poisson1}
Let $\ell:U^\ast\to\R$ be a linear but not necessarily a continuous functional and
$\lambda>0$ a constant. We will see later in Example \ref{ex.poissoncont} that
\begin{align*}
  \phi:U^\ast\to\C, \qquad
  \phi(a):=\exp\left(\lambda\left(e^{i \ell(a)}-1\right)\right)
\end{align*}
is the characteristic function of an infinitely divisible cylindrical probability
measure. In order to write $\phi$ in the form \eqref{eq.charmu} let $\nu$ be the
cylindrical measure on $\Z(U)$ defined by
\begin{align*}
  \nu(Z(a_1,\dots, a_n;B)):=\begin{cases}
   \lambda, & \text{if } (\ell(a_1),\dots, \ell(a_n))\in B, \\
   0, & \text{else,}
   \end{cases}
\end{align*}
for every $a_1,\dots, a_n\in U^\ast$, $B\in \Borel(\R^n)$ and $n\in\N$.
 Then we can represent $\phi$ by
\begin{align*}
  \phi(a)=\exp\left(ip(a)+\int_U \psi_h(\scapro{u}{a})\,\nu(du)\right),
\end{align*}
where $p(a):=\lambda h(\ell(a))$. Since $a\mapsto \exp(ip(a))$ is not positive-definite
in general there does not exist a cylindrical measure with this function as its
characteristic function.
\end{example}

Example \ref{ex.poisson1} leads us to the insight that some necessary conditions
guaranteeing the existence of an infinitely divisible cylindrical probability measure
with cylindrical characteristics  $(p,0,\nu)$ rely on the interplay of the entries $p$
and $\nu$. The following result gives some properties of the entries $p$, $q$ and $\nu$
of the cylindrical characteristics, respectively, but also the interplay of $p$ and
$\nu$.
\begin{lemma}\label{le.necc}
Let $\mu$ be a weakly infinitely divisible cylindrical probability measure on $\Z(U)$
with cylindrical characteristics $(p,q,\nu)_h$ for a continuous truncation function $h$.
It follows that:
\begin{enumerate}
  \item[{\rm (a)}] $\displaystyle
    a\mapsto \kappa(a):= - \left(i p(a)+ \int_{U} \psi_h(\scapro{u}{a})\,\nu(du)\right)$
    is negative-definite.
  \item[{\rm (b)}]   for every sequence $a_n\to a$ in a finite dimensional subspace $V\subseteq U^\ast$
   equipped with $\norm{\cdot}_{U^\ast}$ we have:
          \begin{enumerate}
       \item[{\rm (i)}]
           $ p(a_n)\to p(a)$;
         \item[{\rm (ii)}] $q(a_n)\to q(a)$.
         \item[{\rm (iii)}]$ \left(\abs{s}^2\wedge 1\right)\, (\nu\circ a_n^{-1})(ds)
           \to \left(\abs{s}^2\wedge 1\right)\, (\nu\circ a^{-1})(ds)$ weakly;
          \end{enumerate}
\end{enumerate}
\end{lemma}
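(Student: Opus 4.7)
For part (a), my plan is to show that $\exp(-\kappa)$ is positive-definite on $U^\ast$, whence the Schoenberg correspondence (\cite[Ch.~IV.1.4]{Vaketal}) forces $\kappa$ to be negative-definite. Since any finite-sample test for positive-definiteness involves only finitely many points, fix a finite-dimensional subspace $V \subseteq U^\ast$ with basis $a_1,\dots,a_n$. The Radon probability measure $\mu_n := \mu\circ\pi_{a_1,\dots,a_n}^{-1}$ on $\R^n$ is infinitely divisible with characteristic function
\begin{align*}
\phi_{\mu_n}(t)=\exp\left(ip\Big(\textstyle\sum_i t_i a_i\Big)-\tfrac12 q\Big(\textstyle\sum_i t_i a_i\Big)+\int_{\R^n}\psi_h(\scapro{s}{t})\,\nu_n(ds)\right),
\end{align*}
where $\nu_n := \nu\circ\pi_{a_1,\dots,a_n}^{-1}$. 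The finite-dimensional L\'evy--Khintchine theorem factors $\mu_n = \gamma_n\ast \tilde{\mu}_n$, with $\gamma_n$ the centred Gaussian of covariance $(B(a_i,a_j))_{i,j}$ (where $q=B(\cdot,\cdot)$ for the symmetric bilinear form $B$) and $\tilde{\mu}_n$ the non-Gaussian infinitely divisible factor. Dividing characteristic functions gives $\phi_{\tilde{\mu}_n}(t) = \exp(-\kappa(\sum_i t_i a_i))$, and since $\tilde{\mu}_n$ is itself infinitely divisible, $\exp(-t\kappa(\cdot))$ is positive-definite on $V$ for every $t\ge 0$. Schoenberg then yields (a).

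For part (b), fix a basis $b_1,\dots,b_d$ of $V$ and set $\mu_V := \mu\circ\pi_{b_1,\dots,b_d}^{-1}$, an infinitely divisible Radon measure on $\R^d$ with L\'evy measure $\eta_V$ and Gaussian covariance $R_V$. For $a = \sum_i\alpha_i b_i \in V$ the linear map $L_\alpha(s)=\alpha\cdot s$ satisfies $\mu\circ a^{-1} = \mu_V\circ L_\alpha^{-1}$, and Lemma \ref{le.characteristicsmua} identifies the characteristics of this one-dimensional infinitely divisible measure as $(p(a),q(a),\nu\circ a^{-1})_h$. Item (ii) is direct since $q(a)=\alpha^{\top} R_V\alpha$ depends continuously on $\alpha$. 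For (iii), change of variables rewrites
\begin{align*}
\int_\R f(s)(\abs{s}^2\wedge 1)\,d(\nu\circ a^{-1})(s) = \int_{\R^d} f(\alpha\cdot s)(\abs{\alpha\cdot s}^2\wedge 1)\,d\eta_V(s)
\end{align*}
for any bounded continuous $f:\R\to\R$; pointwise convergence of the integrand as $\alpha^{(n)}\to\alpha$, combined with the domination $\abs{\alpha\cdot s}^2\wedge 1 \le \max(1,M^2)(\abs{s}^2\wedge 1)$ (with $M = \sup_n\abs{\alpha^{(n)}}$, integrable against $\eta_V$ since $\eta_V$ is a L\'evy measure on $\R^d$), closes (iii) via dominated convergence.

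For (i), continuity of $\phi_\mu$ on $V$ (Bochner) combined with the non-vanishing of $\phi_\mu$ (inherited from non-vanishing of $\phi_{\mu\circ a^{-1}}$ for infinitely divisible measures on $\R$) yields a continuous logarithm on $V$ which by \eqref{eq.charmua} equals $ip(a)-q(a)/2+\int_\R\psi_h(s)\,d(\nu\circ a^{-1})(s)$. The kernel $\psi_h(s)/(\abs{s}^2\wedge 1)$ is bounded and (using continuity of $h$ together with $h=\Id$ near $0$, so that $\psi_h(s)=O(s^2)$ at $0$) continuous on $\R$, so (iii) delivers convergence of $\int\psi_h\,d(\nu\circ a_n^{-1})$; combined with (ii), matching imaginary parts of the log gives (i). The most delicate step of the whole argument is the dominated convergence in (iii); everything else reduces to classical finite-dimensional infinite-divisibility theory and the L\'evy--Khintchine factorisation.
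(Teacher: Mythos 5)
Your argument is correct in substance but follows a noticeably different route from the paper's. For (a) the paper decomposes a cylindrical random variable $Z=W+X$ with $W$ Gaussian (citing Theorem 3.9 of \cite{DaveMarkus}), deduces that the non-Gaussian cylindrical part is weakly infinitely divisible, and then applies Schoenberg's correspondence, checking the Hermitian condition $\overline{\kappa(-a)}=\kappa(a)$ separately via $p(-a)=-p(a)$. Your factorisation $\mu_n=\gamma_n\ast\tilde{\mu}_n$ at the level of the projected measures is the same idea in measure-theoretic clothing; note that positive-definiteness of $\exp(-t\kappa)$ for \emph{all} $t>0$ already forces the Hermitian symmetry, so you silently absorb the step the paper does by hand --- worth a sentence if you invoke the version of Schoenberg in Vakhania et al., which asks for Hermitian symmetry explicitly. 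For (b) the paper simply invokes Theorem VII.2.9 and Remark VII.2.10 of Jacod--Shiryaev (weak convergence of infinitely divisible laws is equivalent to convergence of their characteristics) applied to $\mu\circ a_n^{-1}\to\mu\circ a^{-1}$, and then peels off $q(a_n)\to q(a)$ using finite-dimensional continuity of the quadratic form. Your direct dominated-convergence proof of (iii) is sound (it is essentially the computation the paper performs later to establish \eqref{eq.auxweak} in the proof of Theorem \ref{th.characteristics}), and (ii) via $q(a)=\alpha^{\top}R_V\alpha$ is fine; what you gain is a self-contained argument that does not outsource the convergence-of-characteristics theorem.

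The one step you should tighten is (b)(i). You say that continuity and non-vanishing of $\phi_\mu$ on $V$ yield a continuous logarithm on $V$ which ``equals'' $ip(a)-\tfrac12 q(a)+\int\psi_h\,d(\nu\circ a^{-1})$. As written this is circular: the continuous logarithm and the displayed expression agree only up to $2\pi i\mathbb{Z}$, and promoting that to equality requires knowing the displayed expression is continuous in $a$ --- that is, that $p$ is continuous, which is the claim. The standard repair is one-dimensional: for fixed $a$ the L\'evy--Khintchine exponent $t\mapsto ip(a)t-\tfrac12 q(a)t^2+\int\widetilde{\psi}_h(s,t)\,d(\nu\circ a^{-1})(s)$ is continuous in $t$ and vanishes at $t=0$, hence is the distinguished logarithm of the one-variable function $\phi_{\mu\circ a^{-1}}$; pointwise, hence locally uniform, convergence of the non-vanishing functions $\phi_{\mu\circ a_n^{-1}}\to\phi_{\mu\circ a^{-1}}$ gives convergence of these distinguished logarithms at $t=1$, and subtracting the already-established limits of $q(a_n)$ and of the integral term leaves $p(a_n)\to p(a)$. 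This is exactly what the Jacod--Shiryaev result cited in the paper packages for you.
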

\begin{proof}
(a):
Let $Z$ be a cylindrical random variable on a probability space $(\Omega,\A,P)$ with cylindrical distribution
$\mu$. As in Theorem 3.9 in \cite{DaveMarkus} it follows that there exist two  cylindrical random variables $W$ and $X$ such
that $Z=W+X$ $P$-a.s. where the cylindrical distributions $\mu_1$ of $W$ and $\mu_2$ of $X$ have the characteristic functions
$\phi_1$ and $\phi_2$ given by
\begin{align*}
  \phi_1(a):=\exp(-\tfrac{1}{2}q(a)),\qquad \phi_2(a):=\exp(-\kappa(a)).
\end{align*}
For fixed $a_1,\dots, a_n\in U^\ast$  the $\R^n$-valued random variable $(Za_1,\dots, Za_n)$ is
infinitely divisible since $\mu$ is assumed to be weakly infinitely divisible and the $\R^n$-valued random
variable $(Wa_1,\dots,Wa_n)$ is also infinitely divisible as it is Gaussian. Thus, the $\R^n$-valued random variable $(Xa_1,\dots, Xa_n)$ is infinitely divisible, that is the cylindrical measure $\mu_2$ is weakly infinitely divisible.

We show (a) by applying Schoenberg's correspondence, see
\cite[Property(h), p.192]{Vaketal}, for which we have to show that $a\mapsto
\exp(-\tfrac{1}{k}\kappa(a))$ is positive-definite for all $k\in\N$ and that $\kappa$ is
Hermitian, i.e. $\overline{\kappa(-a)}=\kappa(a)$ for all $a\in U^\ast$. To prove
positive-definiteness, fix $k\in\N$, $a_1,\dots, a_n\in U^\ast$ and $z_1,\dots, z_n\in
\C$ and let $e_i$ denote the $i$-th unit vector in $\R^n$. Since $\mu_2$ is weakly infinitely divisible
there exists a characteristic function $\phi_{\xi_{k,a_1,\dots,a_n}}$ of a probability
measure $\xi_{k,a_1,\dots, a_n}$ on $\Borel(\R^n)$  such that
\begin{align*}
 \phi_{\mu_2\circ \pi_{a_1,\dots,a_n}^{-1}}(t)=\left(\phi_{\xi_{k,a_1,\dots,a_n}}(t)\right)^k\fa
  t\in\R^n.
\end{align*}
Consequently, we have
\begin{align*}
  \sum_{i,j=1}^n z_i \bar{z}_j\exp\left(-\tfrac{1}{k}\kappa(a_i-a_j)\right)
  &= \sum_{i,j=1}^n z_i\bar{z}_j\left(\phi_{\mu_2}(a_i-a_j)\right)^{1/k}\\
    &= \sum_{i,j=1}^n z_i\bar{z}_j\left(\phi_{\mu_2\circ\pi^{-1}_{a_1,\dots,a_n}}(e_i-e_j)\right)^{1/k}\\
  &=\sum_{i,j=1}^n z_i\bar{z}_j \phi_{\xi_{k,a_1,\dots, a_n}}(e_i-e_j)\\
  &\ge 0,
\end{align*}
where the last line follows from the fact that $\phi_{\xi_{k,a_1,\dots, a_n}}$ is a characteristic function on $\R^n$.

Next, we want show that $\kappa $ is Hermitian. Since rewriting the characteristic function of $\mu_2$ for different
 truncation functions does not effect the function $\kappa$
we can fix $h(s)=s\1_{B_{\R}}(s)$ for $s\in\R$ which yields
$\widetilde{\psi}_h(-s,t)=\widetilde{\psi}_h(s,-t)$ for all $s,t\in\R$. By Lemma
\ref{le.characteristicsmua} we obtain for all $t\in\R$ that
 \begin{align*}
   \phi_{\mu_2\circ a^{-1}}(t)
   &= \phi_{\mu_2\circ (-a)^{-1}}(-t)\\
   &=  \exp\left(i p(-a)(-t)+\int_{\R} \widetilde{\psi}_h(s,-t)\, (\nu\circ(-a)^{-1})(ds)\right)\\
   &= \exp \left(i p(-a)(-t)+\int_{\R}\widetilde{\psi}_h(-s,-t)\,(\nu\circ a^{-1})(ds)\right)\\
   &= \exp \left(i p(-a)(-t)+\int_{\R}\widetilde{\psi}_h(s,t)\,(\nu\circ
   a^{-1})(ds)\right),
 \end{align*}
 which implies $p(-a)=-p(a)$. It follows that
 \begin{align*}
   \overline{\kappa(-a)}&=\overline{-i p(-a)}-\int_{U}\overline{\psi_h(\scapro{u}{-a})}\,\nu(du)\\
   &=-i  p(a) -\int_U  \psi_h(\scapro{u}{a})\,\nu(du)\\
   &=\kappa(a)
 \end{align*}
 for all $a\in U^\ast$, which completes the proof of (a).

 To see (b) let $a_n\to a$ in a finite-dimensional subspace $V\subseteq U^\ast$ and let the truncation function $h$ be
 continuous. Then Bochner's theorem implies  that
 \begin{align}\label{eq.charconv}
   \lim_{n\to\infty}\phi_{\mu\circ a_n^{-1}}(t)
    = \lim_{n\to\infty}\phi_{\mu}(ta_n)
    = \phi_{\mu}(ta)
    =\phi_{\mu\circ a^{-1}}(t)
 \end{align}
for all $t\in \R$. By Lemma \ref{le.characteristicsmua} the measures $\mu\circ a_n^{-1}$
are infinitely divisible with characteristics $(p(a_n), q(a_n),\nu\circ a_n^{-1})$. It
follows from \eqref{eq.charconv} that the infinitely divisible measures with
characteristics $(p(a_n),q(a_n),\nu\circ a_n^{-1})$ converge weakly to $\mu\circ a^{-1}$
which has the characteristics $(p(a),q(a),\nu\circ a^{-1})$. Applying Theorem VII.2.9 and
Remark VII.2.10 (p.396) in Jacod and Shiryaev which characterises the weak convergence of
infinitely divisible measures in terms of their characteristics implies $p(a_n)\to p(a)$
and
\begin{align*}
  &q(a_n)\,\delta_0(ds) + \big(\abs{s}^2\wedge 1\big)\,(\nu\circ a_n^{-1})(ds)\\
  &\hspace*{2cm}
   \to q(a)\,\delta_0(ds) + \big(\abs{s}^2\wedge 1\big)\,(\nu\circ a^{-1})(ds)
   \qquad\text{weakly}.
\end{align*}
But since $q$ is a quadratic form and therefore it is continuous on a finite-di\-men\-sional space we have
$q(a_n)\to q(a)$ which is property (ii) and which results in (iii).

\end{proof}

\begin{theorem}\label{th.characteristics}
Let $\nu:\Z(U)\to [0,\infty]$ be a given set function and $p$, $q:U^\ast\to \R$ be given
functions and let $h$ be a continuous truncation function. Then the following are
equivalent:
\begin{enumerate}
  \item[{\rm(a)}] there exists an infinitely divisible cylindrical probability measure $\mu$ with
  cylindrical characteristics $(p,q,\nu)_h$;
  \item[{\rm (b)}] the following is satisfied:
    \begin{enumerate}
       \item[{\rm (1)}] $p(0)=0$ and $p(a_n)\to p(a)$ for every sequence $a_n\to a$ in a finite dimensional subspace $V\subseteq U^\ast$
      equipped with $\norm{\cdot}_{U^\ast}$;
       \item[{\rm (2)}] $q:U^\ast\to\R$ is a quadratic form;
       \item[{\rm (3)}] $\nu$ is a cylindrical L{\'e}vy measure;
       \item[{\rm (4)}] the function         
       \begin{align*}
 \qquad   a\mapsto \kappa(a):= - \left(i p(a)+ \int_{U} \psi_h(\scapro{u}{a})\,\nu(du)\right)
 \end{align*}
    is negative-definite.
    \end{enumerate}
\end{enumerate}
In this situation, the characteristic function of $\mu$ is given by
\begin{align*}
  \phi_\mu:U^\ast\to \C,
  \qquad \phi_\mu(a)=
  \exp\left(ip(a)-\tfrac{1}{2}q(a)+\int_U \psi_h(\scapro{u}{a})\, \nu(du)\right)
\end{align*}
and $\mu=\mu_1\ast\mu_2$ where $\mu_1$ and $\mu_2$ are cylindrical probability measures
with characteristic functions $\phi_{\mu_1}(a)=\exp(-\tfrac{1}{2}q(a))$ and
$\phi_{\mu_2}(a)=\exp(-\kappa(a))$.
\end{theorem}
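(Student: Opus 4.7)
The direction (a) $\Rightarrow$ (b) is essentially a compilation of the preceding results. Since every infinitely divisible cylindrical probability measure is weakly infinitely divisible, Theorem \ref{th.cyllevymeasure} immediately provides (2) that $q$ is a quadratic form and (3) that $\nu$ is a cylindrical L\'evy measure (the one-dimensional projection $\nu\circ a^{-1}$ is the L\'evy measure of $\mu\circ a^{-1}$). Lemma \ref{le.necc} furnishes the continuity of $p$ in (1)(ii) and the negative-definiteness of $\kappa$ in (4), and $p(0)=0$ follows by evaluating $\phi_\mu(0)=1$ together with $q(0)=0$ and $\psi_h(0)=0$, fixing the canonical branch of the imaginary part.

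For the reverse direction (b) $\Rightarrow$ (a), the plan is to construct $\mu$ as the convolution $\mu_1\ast \mu_2$ of the two cylindrical probability measures appearing in the statement, each built via the cylindrical Bochner theorem \cite[Prop.VI.3.2]{Vaketal}. The integral defining $\kappa$ is finite because $|\psi_h(s)|\le C(s^2\wedge 1)$ (since $h$ is bounded and equals the identity near $0$) while (3) gives $\int_{\R}(s^2\wedge 1)(\nu\circ a^{-1})(ds)<\infty$ for every $a\in U^\ast$. The Gaussian part $\phi_1(a)=\exp(-\tfrac12 q(a))$ is positive-definite because $q$ is a (non-negative) quadratic form, and continuous on every finite-dimensional subspace since quadratic forms are automatically continuous there. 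For $\phi_2(a)=\exp(-\kappa(a))$ one has $\kappa(0)=0$ from $p(0)=0$ and $\psi_h(0)=0$, and Schoenberg's correspondence \cite[Property(h), p.192]{Vaketal} combined with the negative-definiteness in (4) supplies positive-definiteness.

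The bulk of the work lies in showing the continuity of $\phi_2$ on each finite-dimensional subspace $V\subseteq U^\ast$. Condition (1)(ii) handles the $p$-term, so it suffices to show that $a\mapsto \int_U \psi_h(\scapro{u}{a})\,\nu(du)$ is continuous on $V$. Fixing a basis $a_1,\dots,a_d$ of $V$ and a sequence $a_n\to a$ in $V$, the integrands converge pointwise since $h$ and hence $\psi_h$ is continuous; the uniform domination
\begin{align*}
  \abs{\psi_h(\scapro{u}{a_n})}\le C\big(\scapro{u}{a_n}^2\wedge 1\big)
   \le C'\sum_{i=1}^d \big(\scapro{u}{a_i}^2\wedge 1\big),
\end{align*}
valid for $n$ large by an elementary comparison of $(\,\cdot\,^2\wedge 1)$-quantities on the finite-dimensional space, is $\nu$-integrable by (3), so dominated convergence applies. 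Setting $\mu:=\mu_1\ast\mu_2$ then yields a cylindrical probability measure with characteristic function \eqref{eq.charmu} and cylindrical characteristics $(p,q,\nu)_h$. Finally, to upgrade to infinite divisibility in the sense of Definition \ref{de.infdivcyl}, observe that for each $k\in\N$ the triplet $(p/k,q/k,\nu/k)_h$ also satisfies (1)--(4) (the associated function is $\kappa/k$, still negative-definite), so the same construction produces a cylindrical probability measure $\mu^{(k)}$ with $(\phi_{\mu^{(k)}})^k=\phi_\mu$, whence $\mu=(\mu^{(k)})^{\ast k}$. The principal obstacle I anticipate is precisely this uniform dominated-convergence step: everything else follows directly from cylindrical Bochner and Schoenberg.
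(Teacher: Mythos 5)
Your proposal is correct and follows the same overall architecture as the paper's proof: (a)$\Rightarrow$(b) by combining Theorem \ref{th.cyllevymeasure} with Lemma \ref{le.necc}, and (b)$\Rightarrow$(a) by producing the Gaussian factor $\mu_1$ from the quadratic form, producing $\mu_2$ together with its $k$-th convolution roots from $\exp(-\kappa/k)$ via Schoenberg's correspondence and the cylindrical Bochner theorem, and convolving. The one step where you genuinely diverge is the continuity of $\phi_2$ on finite-dimensional subspaces: the paper does \emph{not} apply dominated convergence to $\int_U\psi_h(\scapro{u}{a_n})\,\nu(du)$ directly, but uses dominated convergence only to obtain the weak convergence of $\left(\abs{s}^2\wedge 1\right)(\nu\circ a_n^{-1})(ds)$ and then invokes the Jacod--Shiryaev criterion (Theorem VII.2.9 and Remark VII.2.10 of \cite{JacodShir}) for weak convergence of infinitely divisible laws in terms of their characteristics, exploiting $\phi_k(a)=\left(\phi_{\mu_a}(1)\right)^{1/k}$. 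Your direct route is more elementary and self-contained, resting only on $\abs{\psi_h(s)}\le C\left(s^2\wedge 1\right)$ for a continuous truncation function; the paper's route has the advantage of reusing verbatim the argument pattern of Lemma \ref{le.necc} and Lemma \ref{le.continuouscylinfdiv}. Your dominating function $C'\sum_{i=1}^d\left(\scapro{u}{a_i}^2\wedge 1\right)$ over a basis of $V$ is in fact the robust choice: a domination by a multiple of $\abs{\scapro{u}{a_0}}^2\wedge 1$ alone fails on vectors $u$ annihilated by $a_0$, so your estimate is the more careful one. Two minor remarks: positive-definiteness of $e^{-q/2}$ needs $q$ to be a non-negative quadratic form, which you flag parenthetically and the paper subsumes in its citation of \cite[p.393]{Vaketal}; and $\phi_\mu(0)=1$ only forces $p(0)\in 2\pi\mathbb{Z}$, so the normalization $p(0)=0$ really comes from the construction of $w$ in Theorem \ref{th.cyllevymeasure} rather than from evaluating the exponent at $0$ --- but the paper is equally terse on this point.
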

\begin{proof}
(a)$\Rightarrow$(b):  The properties (2) and (3) are stated in Theorem
\ref{th.cyllevymeasure} and the properties (1) and (4) are derived in Lemma
\ref{le.necc}. The property $p(0)=0$ is an immediate consequence of Bochner's theorem as
is the fact that $q(0)=0$.

(b)$\Rightarrow$(a): Property (2) implies that
\begin{align*}
  \phi_1:U^\ast\to \C, \qquad \phi_1(a):=e^{-\tfrac{1}{2}q(a)}
\end{align*}
is the characteristic function of a Gaussian cylindrical probability measure $\mu_1$, see
\cite{riedlewiener} or \cite[p.393]{Vaketal}. Since  also $\frac{1}{k}q$ is a quadratic
form for every $k\in\N$ it follows that $(\phi_1)^{1/k}$ is the characteristic function
of a cylindrical measure  which verifies that $\mu_1$ is infinitely divisible. Thus, we are left to establish that
\begin{align*}
  \phi_2:U^\ast\to \C, \qquad \phi_2(a):=e^{-\kappa(a)}
\end{align*}
is the characteristic function of an infinitely divisible cylindrical measure.

For that purpose we show that the functions
\begin{align*}
  \phi_k:U^\ast\to \C,\qquad
  \phi_k(a):=\exp\left(\frac{1}{k}\left(ip(a)+\int_{U}\psi_h(\scapro{u}{a})\,\nu(du)\right)\right)
\end{align*}
are the characteristic function of a cylindrical probability measure for each $k\in \N$ .
The case $k=1$ shows that there exists a cylindrical measure $\mu$ with characteristic
function $\phi_1$ and the cases $k\ge 1$ show that $\mu$ is infinitely divisible. Note
firstly, that the integral in the definition of $\phi_k$ exists and is finite because of
condition (3).

Obviously, $\phi_k(0)=1$ by (1) and (3). Property (4) implies by  the Schoenberg's correspondence
for functions on Banach spaces (property (h), p.192 in \cite{Vaketal}) that $\phi_k$ is
positive-definite. In order to show the last condition of Bochner's theorem let
$V\subseteq U^\ast$ be a finite-dimensional subspace, say $V=\text{span}\{b_1,\dots,
b_d\}$ for $b_1,\dots, b_d\in U^\ast$ and $a_n\to a_0$ in $V$. Then $(U,\Z(U,\{b_1,\dots,
b_d\}),\nu)$ is a measure space. Let $f:\R\to\R$ be a bounded continuous function and
define
\begin{align*}
  g_n:U\to\R,\qquad
   g_n(u):=f(\scapro{u}{a_n})\,\left(\abs{\scapro{u}{a_n}}^2\wedge 1\right)
\end{align*}
for $n\in \N\cup\{0\}$. It follows by (3) that each $g_n\in L_\nu^1(U,\Z(U,\{b_1,\dots,
b_d\}))$ and
\begin{align*}
  \abs{g_n(u)}\le \norm{f}_\infty (1+c) \left( \abs{\scapro{u}{a_0}}^2\wedge 1\right)
\end{align*}
for a constant $c>0$. Lebesgue's theorem of dominated convergence implies that
\begin{align*}
  \lim_{n\to\infty} \int_{U} g_n(u)\,\nu(du)=\int_U g_0(u)\,\nu(du),
\end{align*}
which shows that
\begin{align}\label{eq.auxweak}
   \left(\abs{s}^2\wedge 1\right)\, (\nu\circ a_n^{-1})(ds)
           \to \left(\abs{s}^2\wedge 1\right)\, (\nu\circ a^{-1})(ds)
            \text{ weakly}.
\end{align}
Condition (3) guarantees for each $a\in U^\ast$, that
  \begin{align*}
\phi_{\mu_a}:\R\to\C, \qquad    \phi_{\mu_a}(t)=\exp\left(i p(a)t + \int_{\R}
\widetilde{\psi}_h(s,t)\,(\nu\circ a^{-1})(ds)\right)
  \end{align*}
is the characteristic function of an infinitely divisible probability measure, say
$\mu_a$ on $\Borel(\R)$ with characteristics $(p(a),0,\nu\circ a^{-1})_h$. Then condition
(1) together with the weak convergence in \eqref{eq.auxweak}  imply by Theorem VII.2.9
and Remark VII.2.10 (p.396) in \cite{JacodShir} that $\phi_{\mu_{a_n}}(t)\to
\phi_{\mu_{a_0}}(t)$ for all $t\in\R$. Because $\phi_k(a)=\left(\phi_{\mu_a}(1)\right)^{1/k}$
for all $a\in U^\ast$ and $k\in\N$ the functions $\phi_k$ are verified as continuous on
every finite-dimensional subspace which is the last condition in Bochner's theorem.

The remaining part follows directly from the proof of (b).
\end{proof}

\begin{example}\label{ex.poissoncont}
Now we can show that  the function $\phi$ in Example \ref{ex.poisson1} is in fact the
characteristic function of an infinitely divisible cylindrical measure. The linearity of $\ell$ yields that
the mapping $a\mapsto p(a)=\lambda h(\ell(a))$ is continuous on each finite dimensional subspace of $U^\ast$ if the
truncation function $h$ is continuous. The measure $\nu$ satisfies $\nu\circ a^{-1}=\lambda\delta_{\ell(a)}$ for each $a\in U^\ast$
and is therefore a cylindrical L{\'e}vy measure.  Since
\eqref{eq.psinegative} yields that
\begin{align*}
f:\R\to \C,\qquad  f(t):=- \lambda\left(e^{it}-1\right)
\end{align*}
is a  negative-definite function it follows for  $z_1,\dots ,z_n\in \C$, $a_1,\dots,
a_n\in U^\ast$ that
\begin{align*}
&  \sum_{i,j=1}^n z_i\bar{z}_j \kappa(a_i-a_j) =
 \sum_{i,j=1}^n z_i\bar{z}_j f\big(\ell(a_i)-\ell(a_j)\big)
\le 0.
\end{align*}
Thus, the map $\kappa$ is negative-definite  which proves the claim
due to Theorem \ref{th.characteristics}.
\end{example}

For a given cylindrical L{\'e}vy measure $\nu$ there does not exist in general an
infinitely divisible cylindrical probability measure with cylindrical characteristics
$(0,0,\nu)$, see Example \ref{ex.poisson1}. But one might be able to construct a function $p:U^\ast\to \R$ such that
there exists a cylindrical probability measure with cylindrical characteristics
$(p,0,\nu)$.

The following example shows the construction of the function $p$  for a given cylindrical
L{\'e}vy measure $\nu$ with weak second moments. In Section \ref{se.measure} we consider
the case if the cylindrical L{\'e}vy measure extends to a $\sigma$-finite measure on
$\Borel(U)$.
\begin{example}
 Let $\nu$ be a cylindrical L{\'e}vy measure which satisfies
 \begin{align*}
   \int_{U}\abs{\scapro{u}{a}}^2\,\nu(du)<\infty\qquad\text{for all }a\in
   U^\ast.
 \end{align*}
The existence of the weak second moments enables us to define
\begin{align*}
p:U^\ast\to\R, \qquad  p(a):=\int_U \left(h(\scapro{u}{a})-
\scapro{u}{a}\right)\,\nu(du)
\end{align*}
for a continuous truncation function $h$. With a careful analysis similar to the one in the proof of Theorem \ref{th.Levymeasure1} it can be shown that $p$ is continuous on every finite-dimensional subspace of $U^\ast$. From
\eqref{eq.psinegative} it follows that
\begin{align*}
  f:\R\to\C  \qquad f(t):=-\left(e^{it}-1- it\right)
\end{align*}
is negative-definite. For  $z_1,\dots ,z_n\in \C$, $a_1,\dots, a_n\in U^\ast$ we have:
\begin{align*}
&  \sum_{i,j=1}^n -z_i\bar{z}_j\left(ip(a_i-a_j)+ \int_U
  \psi_h(\scapro{u}{a_i-a_j})\,\nu(du)\right)\\
&\qquad  = \int_U \sum_{i,j=1}^n z_i\bar{z}_j
f(\scapro{u}{a_i}-\scapro{u}{a_j})\,\nu(du)\le 0.
\end{align*}
Theorem \ref{th.characteristics} shows that there exists an infinitely divisible
cylindrical measure with cylindrical characteristics $(p,0,\nu)_h$.
\end{example}

We finish this section with establishing that our two Definitions \ref{de.infdivcyl} and \ref{de.weaklyinfdivcyl} of infinitely divisibility for cylindrical measures coincide. In particular, this result enables us to show that a Radon measure is already
infinitely divisible if all its finite dimensional projections are infinitely divisible.
\begin{theorem}\hfill
\begin{enumerate}
  \item[{\rm (a)}] A cylindrical probability measure $\mu$ on $\Z(U)$ is infinitely divisible
  if and only if it is weakly infinitely divisible.
    \item[{\rm (b)}] A Radon probability measure $\mu$ on $\Borel(U)$ is infinitely divisible if and only if
  \begin{align*}
    &\mu\circ\pi_{a_1,\dots, a_n}^{-1} \text{ is an infinitely divisible probability measure for all
    }\\
    &\qquad a_1,\dots, a_n\in U^\ast, n\in\N.
  \end{align*}
\end{enumerate}
\end{theorem}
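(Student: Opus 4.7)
The forward direction of (a) was noted right after Definition \ref{de.weaklyinfdivcyl}. For the converse, suppose $\mu$ is weakly infinitely divisible and fix a continuous truncation function $h$. By Theorem \ref{th.cyllevymeasure} together with the rewriting leading to \eqref{eq.charmu}, $\mu$ has cylindrical characteristics $(p,q,\nu)_h$; in particular the conditions (2) and (3) of Theorem \ref{th.characteristics}(b) hold. Lemma \ref{le.necc}(a) gives condition (4), while Lemma \ref{le.necc}(b)(i) gives the continuity part of (1). The normalisations $p(0)=0$ and $q(0)=0$ follow from $\phi_\mu(0)=1$ in exactly the same way as argued in the proof of the implication (a)$\Rightarrow$(b) of Theorem \ref{th.characteristics}. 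All hypotheses of Theorem \ref{th.characteristics}(b) are therefore satisfied, and its implication (b)$\Rightarrow$(a) produces an infinitely divisible cylindrical probability measure $\mu'$ with the same characteristic function as $\mu$. Uniqueness of the characteristic function for cylindrical probability measures forces $\mu=\mu'$, completing the proof of (a).

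For (b), the forward direction is routine: if $\mu = \mu_k^{\ast k}$ for some Radon $\mu_k$, then pushing forward under $\pi_{a_1,\ldots,a_n}$ gives $\mu \circ \pi_{a_1,\ldots,a_n}^{-1} = (\mu_k \circ \pi_{a_1,\ldots,a_n}^{-1})^{\ast k}$ for every finite collection $a_1,\ldots,a_n \in U^\ast$. For the converse, the hypothesis says precisely that $\mu$, viewed as a cylindrical measure on $\Z(U)$, is weakly infinitely divisible; by part (a) there exist, for each $k\in\N$, cylindrical probability measures $\mu_k$ with $\mu = \mu_k^{\ast k}$ cylindrically.

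The remaining step, and the main obstacle, is to lift each $\mu_k$ to a Radon probability measure on $\Borel(U)$. The plan is to symmetrise: set $\check\mu(B) := \mu(-B)$ and form $\mu^s := \mu \ast \check\mu$ and $\mu_k^s := \mu_k \ast \check\mu_k$. Then $\mu^s$ is Radon and symmetric, and $(\mu_k^s)^{\ast k} = \mu^s$ cylindrically. Realising $k$ i.i.d.\ cylindrical random variables $X_1,\ldots,X_k$ with law $\mu_k^s$ on a suitable probability space, their sum is a $U$-valued random element with the Radon law $\mu^s$. A L\'evy-type maximal inequality for independent symmetric sums, combined with the It\^o--Nisio theorem, then forces each $X_i$ to be a genuine $U$-valued random element, so $\mu_k^s$ is Radon. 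Finally, shift-compactness applied to the relations $\mu_k \ast \check\mu_k = \mu_k^s$ and $\mu_k^{\ast k} = \mu$ (both Radon) yields Radonness of $\mu_k$, finishing the proof.
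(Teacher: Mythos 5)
Part (a) of your proposal is essentially the paper's argument: weak infinite divisibility gives the cylindrical characteristics via Theorem \ref{th.cyllevymeasure}, Lemma \ref{le.necc} supplies conditions (1) and (4) of Theorem \ref{th.characteristics}, and the implication (b)$\Rightarrow$(a) of that theorem together with uniqueness of characteristic functions closes the loop. That part is fine.

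Part (b) contains a genuine gap. The step in which you realise i.i.d.\ cylindrical random variables $X_1,\dots,X_k$ with law $\mu_k^s$ and then invoke ``a L\'evy-type maximal inequality combined with the It\^o--Nisio theorem'' to conclude that each $X_i$ is a genuine $U$-valued random element does not work as stated: a cylindrical random variable is only a linear map $U^\ast\to L^0_P(\Omega,\A)$, so the quantity $\norm{X_i}=\sup_{\norm{a}\le 1}\abs{X_ia}$ is an uncountable supremum of a.e.-defined random variables and is not measurable a priori. Both the L\'evy maximal inequality and the It\^o--Nisio theorem are statements about genuine $U$-valued random elements (respectively their partial sums), which is precisely what you are trying to establish; as written the argument assumes what it must prove. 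The correct tool here is the shift-compactness theorem for factors of Radon measures --- Theorem 1 of \cite{Ros}, which the paper applies directly to $\mu=\mu_k\ast\mu_k^{\ast(k-1)}$: it yields an $\ell$ in the \emph{algebraic} dual $U^{\ast\prime}$ with $\mu_k\ast\delta_\ell$ Radon. You do invoke ``shift-compactness'' in your last sentence, but by then the appeal is both circular (it presupposes that $\mu_k^s$ is Radon, which was to come from the flawed L\'evy/It\^o--Nisio step) and redundant (it can be applied to $\mu$ itself, making the symmetrisation detour unnecessary).

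A second, smaller omission: even granting a shift $\ell$ with $\mu_k\ast\delta_\ell$ Radon, one cannot simply ``undo'' the shift, because $\ell$ need only lie in $U^{\ast\prime}$, not in $U$. The paper closes this by observing that $\mu\ast\delta_\ell^{\ast k}=(\mu_k\ast\delta_\ell)^{\ast k}$ is Radon, deducing from \cite[Prop.7.14.50]{Bogachev} that $\delta_\ell^{\ast k}$ is Radon, hence $\ell\in U$, and only then cancelling the (now Radon) point mass to conclude that $\mu_k$ is Radon. Your sketch skips this identification of $\ell$ as an element of $U$, which is an essential step.
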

\begin{proof}
  (a) If $\mu$ is weakly infinitely divisible then Theorem \ref{th.cyllevymeasure} and
 Lemma \ref{le.necc} guarantee that the cylindrical characteristics of $\mu$ satisfies
  the conditions in Theorem  \ref{th.characteristics}.

  (b) Let the Radon measure $\mu$ satisfy that all its finite dimensional projections are
  infinitely divisible. Then the restriction of $\mu$ to $\Z(U)$ is a weakly infinitely divisible cylindrical measure
  and  it follows from (a) that for each $k\in\N$ there exists a cylindrical probability
  measure $\mu_k$ such that $\mu=\mu_k^{\ast k}$. Theorem 1 in  \cite{Ros} implies that
  there exists $\ell$ in the algebraic dual $U^{\ast \prime}$ of $U^\ast$ such that
  $\mu_k\ast\delta_{\ell}$ is a Radon probability measure where
  \begin{align*}
    \delta_{\ell}(Z):=\begin{cases}
      1, &\text{if } (\ell(a_1),\dots,\ell(a_n))\in B,\\
      0, &\text{otherwise}
    \end{cases}
  \end{align*}
for every $Z:=Z(a_1,\dots,a_n;B)\in\Z(U)$.   Since
  \begin{align*}
    \mu\ast\delta_\ell^{\ast k}=\mu_k^{\ast k}\ast\delta_\ell^{\ast k}
     =(\mu_k\ast \delta_\ell)^{\ast k}
  \end{align*}
  and the right hand side is Radon it follows from \cite[Prop.7.14.50]{Bogachev} that $\delta_\ell^{\ast k}$ is a Radon
  probability measure which implies $\ell\in U$ by considering the characteristic functions.
  Since then $\mu_k\ast\delta_{\ell}$ and $\delta_{\ell}$ are Radon probability measures a
  further application of \cite[Prop.7.14.50]{Bogachev} implies that $\mu_k$ is a Radon
  probability measure, which shows that $\mu$ is  an infinitely divisible Radon measure.
\end{proof}

\section{Continuous infinitely divisible cylindrical measures}

Continuity of cylindrical measures is defined with respect to an arbitrary vector
topology $\O$ in $U^\ast$. We assume here that the topological space $(U^\ast, \O)$
satisfies the first countability axiom, that is that every neighborhood system of every
point in $U^\ast$ has a countable local base. In such spaces, convergence is equivalent
to sequential convergence. In particular, $U^\ast$ equipped with the norm topology
satisfies the first countability axiom.
\begin{definition}
  A cylindrical probability measure $\mu$ on $\Z(U)$ is called {\em $\O$-con\-tin\-uous} if for
  each $\epsilon>0$ there exists a neighborhood $N$ of $0$ such that
  \begin{align*}
    \mu\left(\{u\in U:\abs{\scapro{u}{a}}\ge 1 \}\right)\le
     \epsilon
  \end{align*}
for all $a\in N$.
If $\O$ is the norm topology we say {\em $\mu$ is continuous}.
\end{definition}
A cylindrical probability measure $\mu$ is $\O$-continuous if and only if its
characteristic function $\phi_{\mu}:U^\ast\to \C$ is continuous in the topology $\O$, see
\cite[Th.II.3.1]{Schwartz}. This enables us to derive the following criteria:
\begin{lemma}\label{le.continuouscylinfdiv}
  Let $\mu$ be an infinitely divisible cylindrical probability measure on $\Z(U)$ with cylindrical characteristics $(p,q,\nu)_h$ for
  a continuous truncation function $h$.
  Then the following are equivalent:
  \begin{enumerate}
    \item[{\rm (a)}] $\mu$ is $\O$-continuous;
    \item[{\rm (b)}] for every sequence $a_n\to a$ in $(U^\ast,\O)$ we have:
      \begin{enumerate}
          \item[{\rm (i)}] $ p(a_n)\to p(a)$;
         \item[{\rm (ii)}]$ q(a_n)\delta_0(ds)+ \left(\abs{s}^2\wedge 1\right)\, (\nu\circ
         a_n^{-1})(ds)$\\
            \hspace*{1cm}$\displaystyle\to q(a)\delta_0(ds)+ \left(\abs{s}^2\wedge 1\right)\, (\nu\circ a^{-1})(ds)$
             weakly.
      \end{enumerate}
  \end{enumerate}
\end{lemma}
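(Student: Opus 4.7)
The plan is to reduce $\O$-continuity to sequential continuity of the characteristic function $\phi_\mu$ on $(U^\ast,\O)$ via \cite[Th.II.3.1]{Schwartz} and the first countability assumption, and then to translate this via Lemma \ref{le.characteristicsmua} into weak convergence of the one-dimensional projections $\mu\circ a_n^{-1}$. The Jacod--Shiryaev characterisation of weak convergence of real infinitely divisible measures through their characteristics (Theorem VII.2.9 together with Remark VII.2.10 in \cite{JacodShir}) then delivers both directions. This is essentially the argument already used in the proof of Lemma \ref{le.necc}(b), except that now $a_n\to a$ in the possibly coarser topology $\O$ rather than in the norm topology of a finite-dimensional subspace.

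For (a)$\Rightarrow$(b): Assume $\mu$ is $\O$-continuous, so that by \cite[Th.II.3.1]{Schwartz} the function $\phi_\mu$ is continuous on $(U^\ast,\O)$. If $a_n\to a$ in $(U^\ast,\O)$, then $ta_n\to ta$ in $\O$ for every $t\in\R$ since $\O$ is a vector topology, and thus by \eqref{eq.charimcyl}
\begin{align*}
  \phi_{\mu\circ a_n^{-1}}(t)=\phi_\mu(ta_n)\to\phi_\mu(ta)=\phi_{\mu\circ a^{-1}}(t)
  \fa t\in\R.
\end{align*}
Since the limit is continuous at $0$, L\'evy's continuity theorem on $\R$ yields $\mu\circ a_n^{-1}\to\mu\circ a^{-1}$ weakly. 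By Lemma \ref{le.characteristicsmua} these are infinitely divisible probability measures with characteristics $(p(a_n),q(a_n),\nu\circ a_n^{-1})_h$ and $(p(a),q(a),\nu\circ a^{-1})_h$ respectively, so the forward direction of the Jacod--Shiryaev criterion produces (i) and (ii).

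For (b)$\Rightarrow$(a): By the first countability of $(U^\ast,\O)$ it suffices to establish sequential continuity of $\phi_\mu$. Fix $a_n\to a$ in $(U^\ast,\O)$. Conditions (i) and (ii), together with Lemma \ref{le.characteristicsmua} and the reverse direction of the Jacod--Shiryaev criterion, imply that the real infinitely divisible probability measures $\mu\circ a_n^{-1}$ converge weakly to $\mu\circ a^{-1}$. Evaluating the corresponding characteristic functions at $t=1$ gives
\begin{align*}
  \phi_\mu(a_n)=\phi_{\mu\circ a_n^{-1}}(1)\to\phi_{\mu\circ a^{-1}}(1)=\phi_\mu(a),
\end{align*}
so $\phi_\mu$ is sequentially continuous, hence continuous, on $(U^\ast,\O)$, and another application of \cite[Th.II.3.1]{Schwartz} yields that $\mu$ is $\O$-continuous. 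The only genuine obstacle here is a bookkeeping one, namely making sure that the triplet $(p(\cdot),q(\cdot),\nu\circ(\cdot)^{-1})_h$ produced by Lemma \ref{le.characteristicsmua} matches precisely the form of characteristics used in the Jacod--Shiryaev criterion; this is granted by the continuity of the truncation function $h$, which ensures that the modified drift term behaves continuously under weak convergence of the truncated second moment measures in (ii).
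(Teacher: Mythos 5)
Your argument is correct and follows essentially the same route as the paper: both reduce $\O$-continuity to sequential continuity of $\phi_\mu$, observe that this is equivalent to pointwise convergence of the one-dimensional characteristic functions $\phi_{\mu\circ a_n^{-1}}$ (forward via $\phi_{\mu\circ a_n^{-1}}(t)=\phi_\mu(ta_n)$ and the vector topology, backward via $t=1$), and then invoke Lemma \ref{le.characteristicsmua} together with Theorem VII.2.9 and Remark VII.2.10 of Jacod--Shiryaev to translate weak convergence of these infinitely divisible measures into conditions (i) and (ii). You merely spell out a few steps the paper leaves implicit, such as the appeal to L\'evy's continuity theorem.
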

\begin{proof}
  The cylindrical measure $\mu$ is $\O$-continuous if and only if its characteristic
  function $\phi_\mu:U^\ast\to \C$ is continuous in $(U^\ast,\O)$, or equivalently, that
  $\phi_\mu:U^\ast\to \C$ is sequentially continuous. It follows as in the proof of Theorem  \ref{th.characteristics} that:
  \begin{align*}
  &  \phi_{\mu}(a_n)\to \phi_\mu(a)\quad\text{for all sequences }a_n\to a \text{ in $(U^\ast,\O)$}\\
   &\quad \Longleftrightarrow\;
     \phi_{\mu\circ a_n^{-1}}(t)\to \phi_{\mu\circ a^{-1}}(t)
     \quad\text{for all sequences $a_n\to a$ in $(U^\ast,\O)$,} t\in\R.
  \end{align*}
By applying Theorem VII.2.9 and Remark VII.2.10  in \cite{JacodShir} and Lemma \ref{le.characteristicsmua} the right
hand side is equivalent to the conditions (i) and (ii) in (b) which completes the proof.
\end{proof}

In Lemma \ref{le.continuouscylinfdiv} it does not follow from (a) that we can consider
separately the quadratic form $q$ and the term depending on the  cylindrical L{\'e}vy measure $\nu$ in condition
(b). This is due to the well known fact, that a sequence of infinitely divisible measures
on $\Borel(\R)$ can converge weakly such that the small jumps contribute to the Gaussian
part in the limit. But since an infinitely divisible cylindrical measure $\mu$ is the convolution
of two other infinitely divisible cylindrical measures it is of interest whether the continuity of $\mu$ is
inherited by the convolution cylindrical measures.
\begin{definition}
  An infinitely divisible cylindrical probability measure with cylindrical characteristics $(p,q,\nu)_h$
  is called {\em regularly $\O$-continuous} if the infinitely divisible cylindrical probability measures with
  cylindrical characteristics $(0,q,0)_h$ and $(p,0,\nu)_h$ are $\O$-continuous.
\end{definition}

\begin{lemma}\label{le.regular}
Let $h$ be a continuous truncation function.
For an $\O$-con\-tin\-uous infinitely divisible cylindrical probability measure $\mu$ with cylindrical
characteristics $(p,q,\nu)_h$ the following are equivalent:
\begin{enumerate}
 \item[{\rm (a)}] $\mu$ is regularly $\O$-continuous;
 \item[{\rm (b)}] $q:U^\ast\to \R$ is continuous in $(U^\ast,\O)$;
 \item[{\rm (c)}] for every sequence $a_n\to a$ in $(U^\ast,\O)$ we have:
      \begin{enumerate}
     \item[{\rm (i)}] $p(a_n)\to p(a)$;
    \item[{\rm (ii)}]$\displaystyle \left(\abs{s}^2\wedge 1\right)\, (\nu\circ a_n^{-1})(ds)\to
           \left(\abs{s}^2\wedge 1\right)\, (\nu\circ a^{-1})(ds)\text{ weakly}$.
      \end{enumerate}
\end{enumerate}
\end{lemma}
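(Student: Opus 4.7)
The plan is to exploit the decomposition $\mu=\mu_1\ast\mu_2$ provided by Theorem \ref{th.characteristics}, where $\mu_1$ has cylindrical characteristics $(0,q,0)_h$ and $\mu_2$ has cylindrical characteristics $(p,0,\nu)_h$, with characteristic functions
\begin{align*}
  \phi_{\mu_1}(a)=\exp\bigl(-\tfrac{1}{2}q(a)\bigr),
  \qquad
  \phi_{\mu_2}(a)=\exp\Bigl(ip(a)+\int_U\psi_h(\scapro{u}{a})\,\nu(du)\Bigr).
\end{align*}
A key preliminary observation is that both factors are everywhere nonzero. This is immediate for $\phi_{\mu_1}$, and for $\phi_{\mu_2}$ one has
\begin{align*}
  \abs{\phi_{\mu_2}(a)}=\exp\Bigl(\int_U\bigl(\cos\scapro{u}{a}-1\bigr)\,\nu(du)\Bigr),
\end{align*}
where the integrand is non-positive and bounded in modulus by $\abs{\scapro{u}{a}}^2\wedge 2$, hence $\nu$-integrable by condition (3) of Theorem \ref{th.characteristics}.

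For the equivalence (a)$\Leftrightarrow$(b) I would first note that $\mu_1$ is $\O$-continuous if and only if $\phi_{\mu_1}$ is $\O$-continuous, which in view of the formula above is in turn equivalent to (b). Using $\phi_\mu=\phi_{\mu_1}\phi_{\mu_2}$ together with the hypothesised $\O$-continuity of $\mu$ and the non-vanishing of both factors, one obtains the pointwise identities $\phi_{\mu_2}=\phi_\mu/\phi_{\mu_1}$ and $\phi_{\mu_1}=\phi_\mu/\phi_{\mu_2}$, so the $\O$-continuity of either factor forces the $\O$-continuity of the other. Regular $\O$-continuity---by definition requiring both $\mu_1$ and $\mu_2$ to be $\O$-continuous---therefore reduces to $\mu_1$ alone being $\O$-continuous, i.e.\ to (b).

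For (a)$\Leftrightarrow$(c) the same pairing argument gives that regular $\O$-continuity is equivalent to $\mu_2$ alone being $\O$-continuous. Applying Lemma \ref{le.continuouscylinfdiv} to $\mu_2$, which has cylindrical characteristics $(p,0,\nu)_h$, the Gaussian atoms $q(a_n)\delta_0$ and $q(a)\delta_0$ are absent, and the two conditions in (b) of that lemma collapse precisely to (c)(i) and (c)(ii). Chaining the two equivalences closes the loop between (a), (b), and (c).

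The main technical point, and the principal obstacle, is justifying the nowhere-vanishing of $\phi_{\mu_2}$, since without it the division argument that transports $\O$-continuity between the two factor measures breaks down. Once this is settled via the cylindrical L\'evy measure property (3) of Theorem \ref{th.characteristics}, everything else is a book-keeping consequence of Lemma \ref{le.continuouscylinfdiv} applied separately to the Gaussian factor $\mu_1$ and to the purely non-Gaussian factor $\mu_2$.
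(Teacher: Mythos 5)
Your proposal follows essentially the same route as the paper's proof: decompose $\mu=\mu_1\ast\mu_2$, use the non-vanishing of the characteristic functions to show that (under the standing $\O$-continuity of $\mu$) continuity of either factor forces continuity of the other, and then apply Lemma \ref{le.continuouscylinfdiv} to $\mu_1$ and $\mu_2$ separately. Your explicit verification that $\phi_{\mu_2}$ never vanishes (where the paper simply invokes the standard fact for infinitely divisible measures) and your direct reading of (b) off the formula for $\phi_{\mu_1}$ are minor, correct refinements rather than a different argument.
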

\begin{proof}
Let $\phi_\mu$ be the characteristic function of $\mu$. Then $\phi_\mu=\phi_1\cdot\phi_2$
where $\phi_1$ is the characteristic function of the cylindrical measure $\mu_1$ with cylindrical characteristics $(0,q,0)_h$ and $\phi_2$ is the characteristic function of the cylindrical measure $\mu_2$ with cylindrical characteristics $(p,0,\nu)_h$.
   Since the characteristic function of an infinitely
   divisible measure does not vanish in any point it follows that continuity of $\phi_1$ and $\phi_\mu$ results in the continuity of $\phi_2$ and analogously if $\phi_2$ and $\phi_\mu$ are continuous then $\phi_1$ is continuous. Thus, $\mu$ is regularly $\O$-continuous if
   and only if either $\mu_1$ or $\mu_2$ is $\O$-continuous.

Applying Lemma \ref{le.continuouscylinfdiv} to $\mu_1$ shows the equivalence (a)$\Leftrightarrow$ (b) and
applying Lemma \ref{le.continuouscylinfdiv} to $\mu_2$ shows the equivalence (a)$\Leftrightarrow$ (c).
\end{proof}

\begin{remark}\label{re.cov}
If $U^\ast$ is equipped with the norm topology then (b) in Lemma
\ref{le.regular} can be replaced by
\begin{enumerate}
  \item[{\rm (b$^\prime$)}]   there exists a positive, symmetric operator $Q:U^\ast\to U^{\ast\ast}$ such that
  $q(a)=\scapro{a}{Qa}$ for
 all $a\in U^\ast$;
\end{enumerate}
\begin{proof}
According to Proposition IV.4.2 in \cite{Vaketal} there exist a probability space
$(\Omega,\A,P)$ and a cylindrical random variable $X:U^\ast\to L^0_P(\Omega,\A)$  with cylindrical distribution
$(0,q,0)$ and with characteristic function $a\mapsto\phi(a)=\exp(-\tfrac{1}{2}q(a))$.
If $q$ is continuous Proposition VI.5.1 in \cite{Vaketal} implies that the mapping $X:U^\ast\to L^0_P(\Omega,\A)$ is continuous. Consequently, it follows from Theorem 4.7 in \cite{DaveMarkus} that $(Qa)b:=E[(Xa)(Xb)]$ for $a,b\in U^\ast$ defines a positive, symmetric  operator $Q:U^\ast\to U^{\ast\ast}$. Obviously, it satisfies $q(a)=(Qa)a$ for each $a\in U^\ast$.
\end{proof}
\end{remark}

\begin{example}
Let $(\Omega, \A,P)$ be a probability space and let $L:=(L(t):\, t\ge 0)$ be a
cylindrical process, that is  the mappings $L(t):U^\ast\to L^0_P(\Omega,\A)$
are linear. In Applebaum and Riedle \cite{DaveMarkus} we call $L$ a {\em
cylindrical L{\'e}vy process} if
\begin{align*}
  \big((L(t)a_1,\dots, L(t)a_n):\,t\ge 0\big)
\end{align*}
is a L{\'e}vy process in $\R^n$ for all $a_1,\dots, a_n\in U^\ast$, $n\in\N$.
If $L$ is a cylindrical L{\'e}vy process we derive in \cite{DaveMarkus}  that
it can be decomposed according to
\begin{align*}
  L(t)=W(t)+Y(t)\qquad\text{for all $t\ge 0$ $P$-a.s.,}
\end{align*}
where $(W(t):\,t\ge 0)$ and $(Y(t):\,t\ge 0)$ are cylindrical processes. Their
characteristic functions are for all $a\in U^\ast$  given by
\begin{align*}
 \phi_{W(t)}(a):= E[\exp(iW(t)a)]=\exp\left(-\tfrac{1}{2}q(a)t\right)
\end{align*}
for a quadratic form $q:U^\ast\to \R$ and
\begin{align*}
 \phi_{Y(t)}(a):= E[\exp(iY(t)a)]=\exp\left(t\Big(ip(a)+\int_U \psi_h(\scapro{u}{a})\,\nu(du)\Big)\right)
\end{align*}
for a mapping $p:U^\ast\to\R$ and a cylindrical L{\'e}vy measure $\nu$.
Obviously, $(p,q,\nu)_h$ is the cylindrical characteristics of an infinitely divisible cylindrical measure $\mu$. If $\mu$ is regularly continuous, i.e. the cylindrical measures with
the characteristic functions $\phi_{W(1)}$ and $\phi_{L(1)}$ are continuous, it
follows that also the mappings
 \begin{align*}
   W(t):U^\ast\to L^0_P(\Omega,\A),
   \qquad
   Y(t):U^\ast\to L^0_P(\Omega,\A),
 \end{align*}
are continuous, see \cite[Prop.VI.5.1]{Vaketal}. Moreover, according to Remark
\ref{re.cov} the quadratic form $q$ is of the form $q(a)=\scapro{a}{Qa}$ for all $a\in
U^\ast$ and for a symmetric, positive operator $Q:U^\ast\to U^{\ast\ast}$. If
$Q(U^\ast)\subseteq U$ then $W$ is a cylindrical Wiener process in a strong sense as it
is usually considered in the literature, see Riedle \cite{riedlewiener}.
\end{example}

\section{L{\'e}vy measures on Banach spaces}\label{se.measure}

In this section we consider the  situation that the cylindrical L{\'e}vy measure $\nu$
extends to a $\sigma$-finite measure on $\Borel(U)$ which is also denoted by $\nu$.
The unit ball is denoted by $B_U:=\{u\in U:\,\norm{u}\le 1\}$.

\begin{theorem}\label{th.Levymeasure1}
Let $\nu$ be a cylindrical L{\'e}vy measure which extends to a $\sigma$-finite measure on
$\Borel(U)$ with $\nu(B_{U}^c)<\infty$. Then there exists a regularly continuous
infinitely divisible cylindrical probability measure $\mu$ with cylindrical
characteristics $(d_\nu,0,\nu)_h$, where
\begin{align*}
    d_{\nu}:U^\ast\to\R, \qquad
    d_{\nu}(a):=\int_U \big(h(\scapro{u}{a})-\scapro{u}{a}\1_{B_U}(u)\big)\, \nu(du).
  \end{align*}
\end{theorem}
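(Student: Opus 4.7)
My plan is to verify the four hypotheses of Theorem~\ref{th.characteristics} for the triple $(d_\nu,0,\nu)_h$ in order to produce $\mu$, and then to upgrade $\mu$ to regular continuity via Lemmas~\ref{le.continuouscylinfdiv} and~\ref{le.regular}. Conditions~(2) and~(3) hold trivially. For~(1), $d_\nu(a)$ is well-defined by splitting at $\partial B_U$: on $B_U^c$ the integrand is dominated by $\norm{h}_\infty$ with $\nu(B_U^c)<\infty$, while on $B_U$ it vanishes where $\scapro{u}{a}\in D(h)$ and the remainder is supported in $\{u\in B_U:\abs{\scapro{u}{a}}\ge\delta\}$, which has finite $\nu$-measure by Chebyshev combined with $\int(\abs{\scapro{u}{a}}^2\wedge 1)\,\nu(du)<\infty$. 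The analogous splitting on a finite-dimensional $V=\mathrm{span}(b_1,\dots,b_d)$ gives a uniform finite-$\nu$-measure support set $\{u\in B_U:\max_i\abs{\scapro{u}{b_i}}\ge\delta/(Md)\}$ (for $\norm{a_n}\le M$), using $\max_i\abs{\scapro{u}{b_i}}^2\wedge 1\le\sum_i(\abs{\scapro{u}{b_i}}^2\wedge 1)$, on which DCT delivers $d_\nu(a_n)\to d_\nu(a)$.

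The principal work is condition~(4). Rewriting $\kappa(a)=-(A(a)+B(a))$ with
\[
 A(a):=\int_{B_U^c}\bigl[e^{i\scapro{u}{a}}-1\bigr]\,\nu(du),\qquad B(a):=\int_{B_U}\bigl[e^{i\scapro{u}{a}}-1-i\scapro{u}{a}\bigr]\,\nu(du),
\]
the function $\exp(tA)$ is a compound-Poisson characteristic function driven by the finite Borel measure $t\nu|_{B_U^c}$, hence positive-definite for every $t>0$. For $B$, fix any finite family $a_1,\dots,a_n\in U^\ast$ and set $V_m:=\{u\in B_U:\max_j\abs{\scapro{u}{a_j}}>1/m\}$; then $\nu(V_m)\le\sum_j\nu(\{\abs{\scapro{u}{a_j}}>1/m\})<\infty$ by the cylindrical L\'evy property. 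The truncated $B_m(a):=\int_{V_m}[e^{i\scapro{u}{a}}-1-i\scapro{u}{a}]\,\nu(du)$ makes $\exp(tB_m)$ the product of a compound-Poisson characteristic function and $\exp(-it\ell_m(\cdot))$ with linear $\ell_m(a):=\int_{V_m}\scapro{u}{a}\,\nu(du)$, the characteristic function of a cylindrical Dirac (cf.\ Example~\ref{ex.poisson1}); both factors are positive-definite. Using $\abs{e^{it}-1-it}\le t^2/2$ and $\abs{\scapro{u}{a_i-a_j}}^2\le 4\max_k\abs{\scapro{u}{a_k}}^2$, together with the DCT-based decay $\int_{B_U\cap\{\abs{\scapro{u}{a_k}}\le 1/m\}}\abs{\scapro{u}{a_k}}^2\,\nu(du)\to 0$ on decreasing sets, one obtains $B_m(a_i-a_j)\to B(a_i-a_j)$. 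Thus $\exp(-t\kappa)$ at the points $a_i-a_j$ is the pointwise limit of positive-definite functions, and since $\overline{\kappa(-a)}=\kappa(a)$ is immediate, Schoenberg's correspondence yields negative-definiteness. Theorem~\ref{th.characteristics} then produces $\mu$.

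For norm-continuity of $\mu$ I apply Lemma~\ref{le.continuouscylinfdiv} to $a_n\to a$ in norm. Part (b)(i), $d_\nu(a_n)\to d_\nu(a)$, follows by DCT once one observes that for $\norm{a_n-a}<\delta/2$ the difference integrand vanishes on $B_U\cap\{\abs{\scapro{u}{a}}<\delta/2\}$ and is uniformly bounded on the finite-$\nu$-measure complement. For part (b)(ii)---which with $q=0$ reduces to weak convergence of $(\abs{s}^2\wedge 1)(\nu\circ a_n^{-1})(ds)$---the decisive step is that the linear map
\[
 T:U^\ast\to L^2(\nu|_{B_U}),\qquad (Ta)(u):=\scapro{u}{a},
\]
is well-defined (since $\int_{B_U}\abs{\scapro{u}{a}}^2\,\nu(du)<\infty$ by splitting at $\{\abs{\scapro{u}{a}}\le 1\}$) and closed (pointwise convergence $\scapro{u}{a_n}\to\scapro{u}{a}$ on $U$ is matched by any subsequential $\nu$-a.e.\ $L^2$-limit). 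The closed graph theorem makes $T$ continuous, so $\int_{B_U}\abs{\scapro{u}{a_n-a}}^2\,\nu(du)\to 0$; combined with the bound $\abs{\scapro{u}{a_n}}^2\le 2\abs{\scapro{u}{a}}^2+2\abs{\scapro{u}{a_n-a}}^2$ and routine DCT on $B_U^c$, Pratt's lemma delivers weak convergence against every bounded continuous test function. Therefore $\mu$ is continuous; since $q=0$ is trivially continuous, Lemma~\ref{le.regular} upgrades this to regular continuity. The main obstacle is the compound-Poisson/cylindrical-Dirac approximation in~(4), where controlling $B(a_i-a_j)-B_m(a_i-a_j)$ despite possibly $\nu(B_U)=\infty$ forces the $\max_k\abs{\scapro{u}{a_k}}^2$ estimate rather than a cruder bound.
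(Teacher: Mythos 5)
Your proposal is correct and reaches the theorem by a genuinely different route at the two points where the real work happens; the remaining steps (well-definedness and continuity of $d_\nu$ by splitting $U$ into $B_U^c$, a finite-$\nu$-measure subset of $B_U$, and the set where the integrand vanishes; the reduction to Theorem \ref{th.characteristics}; the passage to regular continuity via Lemmas \ref{le.continuouscylinfdiv} and \ref{le.regular}) coincide with the paper's. For the negative-definiteness of $\kappa$ the paper needs no approximation: it rewrites the combined integrand as $\widetilde{\psi}_{h^\prime}\big(\norm{u},\scapro{u}{a}/\norm{u}\big)$ with $h^\prime(s)=s\1_{B_{\R}}(s)$, observes that for each fixed $u$ the map $a\mapsto-\widetilde{\psi}_{h^\prime}(\norm{u},\scapro{u}{a}/\norm{u})$ is negative-definite by \eqref{eq.psinegative} (a one-dimensional negative-definite function pulled back along a linear map), and integrates the resulting pointwise inequality $\sum_{i,j}z_i\bar z_j(\cdots)\le 0$ against $\nu$. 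Your compound-Poisson/cylindrical-Dirac exhaustion over the sets $V_m$ is longer but sound and more constructive, and your $\max_k$-estimate correctly handles the possibility $\nu(B_U)=\infty$. Where your argument actually improves on the paper is the continuity step: the paper dominates $g_n(u)=f(\scapro{u}{a_n})\left(\abs{\scapro{u}{a_n}}^2\wedge 1\right)$ by $\norm{f}_\infty(1+c)\left(\abs{\scapro{u}{a_0}}^2\wedge 1\right)$, a bound which fails wherever $\scapro{u}{a_0}=0\neq\scapro{u}{a_n}$ and is not otherwise justified for a norm-convergent sequence in infinite dimensions; your closed-graph argument that $a\mapsto\scapro{\cdot}{a}$ is bounded from $U^\ast$ into $L^2(\nu|_{B_U})$, followed by Pratt's lemma, establishes \eqref{eq.auxweak2} rigorously and so repairs that gap. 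Only cosmetic polishing remains: record that $d_\nu(0)=0$, and justify the splitting $\kappa(a)=-(A(a)+B(a))$ by noting that $id_\nu(a)$ and $\int_U\psi_h(\scapro{u}{a})\,\nu(du)$ both converge absolutely and may therefore be recombined under one integral sign.
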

\begin{proof}
  First we show that the integral in the definition of the function $d_\nu$ is well
  defined for every truncation function $h$. Choose a constant $c>0$ such that
\begin{align*}
  \{t\in\R:\,\abs{t}\le c\}\subseteq D(h)
\end{align*}
and define for every $a\in U^\ast$ the set
\begin{align*}
D(a):=\{v\in U:\, \abs{\scapro{v}{a}}\le c\}.
\end{align*}
For the integrand $f_a(u):=h(\scapro{u}{a})-\scapro{u}{a}\1_{B_U}(u)$ it follows for
every $u\in U$ that
\begin{align*}
f_a(u)\neq 0
 \;\Rightarrow\; u \in \big( D(a)\cap B_U^c\big) \cup \big( D^c(a)\cap B_U \big) \cup \big(D^c(a)\cap B_U^c \big).
\end{align*}
But on these three domains we obtain
\begin{align*}
  &\int_{D(a)\cap B_U^c}\abs{f_a(u)}\, \nu(du)
  \le \int_{B_U^c} c \,\nu(du) =c\, \nu(B_U^c)<\infty,
\end{align*}
and
\begin{align*}
  \int_{D^c(a)\cap B_U}\abs{f_a(u)} \,\nu(du)
  & \le  \int_{c < \abs{\scapro{u}{a}}\le \norm{a}}
    \abs{h(\scapro{u}{a})-\scapro{u}{a}}\,\nu(du)\\
  &  =  \int_{c < \abs{s}\le \norm{a}}\abs{h(s)-s}\,(\nu\circ a^{-1})(ds)\\
  & \le (\norm{h}_\infty + \norm{a})\,(\nu\circ a^{-1})(\{s\in\R: \abs{s}> c\})<\infty,
\end{align*}
because $\nu\circ a^{-1}$ is a L{\'e}vy measure on $\Borel(\R)$ and
\begin{align*}
  \int_{D^c(a)\cap B_U^c} \abs{f_a(u)}\,\nu(du)
 \le \norm{h}_\infty \int_{B_U^c}\,\nu(du)
  = \norm{h}_\infty \nu(B_U^c)<\infty.
\end{align*}
Now we choose the truncation function $h$ to be continuous and show by a similar
decomposition that $d_\nu$ is continuous. Let $a_n\to a$ in $U^\ast$ and choose a
constant $c>0$ such that
\begin{align*}
  \{t\in\R:\, \abs{t}\le c+\epsilon\}\subseteq D(h).
\end{align*}
for a constant $\epsilon>0$. Let $D(a)=\{v\in U:\,\abs{\scapro{v}{a}}\le c\}$. Since for every $u\in B_U$ we have
\begin{align*}
  \abs{\scapro{u}{a_n}-\scapro{u}{a}}\le \norm{a_n-a},
\end{align*}
we can conclude that there exists $n_0\in\N$ such that $u\in D(a)\cap B_U$ implies that $\scapro{u}{a}, \scapro{u}{a_n}\in D(h)$ for every $n\ge n_0$. Consequently, we have for
$f_{a,n}(u):=h(\scapro{u}{a_n})-h(\scapro{u}{a}) -
\big(\scapro{u}{a_n}-\scapro{u}{a}\big)\1_B(u)$ and $n\ge n_0$ the implication:
\begin{align*}
f_{a,n}(u)\neq 0
 \;\Rightarrow\; u \in \big( D(a)\cap B_U^c\big) \cup \big( D^c(a)\cap B_U \big) \cup \big(D^c(a)\cap B_U^c \big).
\end{align*}
As above it can be shown that $f_{a,n}$ is dominated by an integrable function on all
three sets and therefore, Lebesgue's theorem on dominated convergence shows that
$d_{\nu}$ is continuous.

It follows for $h^\prime(s):=s\1_{B_{\R}}(s)$ from \eqref{eq.psinegative} that
\begin{align*}
  f:\R\to\C  \qquad f(s_0,t):=-\widetilde{\psi}_{h^\prime}(s_0,t)
\end{align*}
is negative-definite for each $s_0\in\R$. For  $z_1,\dots ,z_n\in \C$, $a_1,\dots, a_n\in
U^\ast$ we have:
\begin{align*}
&\sum_{i,j=1}^n -z_i\bar{z}_j
 \left(id_{\nu}(a_i-a_j)+\int_U\psi_h(\scapro{u}{a_i-a_j})\,\nu(du)\right)\\
 &\qquad= \sum_{i,j=1}^n -z_i\bar{z}_j \int_U
    \left(e^{i\scapro{u}{a_i-a_j}}-1-i\scapro{u}{a_i-a_j}\1_{B_U}(u)\right)\,\nu(du)\\
  &\qquad= \sum_{i,j=1}^n -z_i\bar{z}_j \int_U
    \left(e^{i\tfrac{\scapro{u}{a_i-a_j}}{\norm{u}}\norm{u}}-1-i\tfrac{\scapro{u}{a_i-a_j}}{\norm{u}}\norm{u}\1_{B_{\R}}(\norm{u})\right)\,\nu(du)\\
&\qquad  = \int_U \sum_{i,j=1}^n z_i\bar{z}_j f\big(\norm{u},
\tfrac{1}{\norm{u}}\left(\scapro{u}{a_i}-\scapro{u}{a_j}\right)\big)\,\nu(du)\le 0.
\end{align*}
Theorem \ref{th.characteristics} implies that there exists an infinitely divisible
cylindrical probability measure $\mu$ with cylindrical characteristics $(d_\nu,0,\nu)_h$. In order to
show that $\mu$ is continuous let $a_n\to a_0$ in $U^\ast$. For a bounded continuous
function $f:\R\to\R$ define
\begin{align*}
  g_n:U\to\R,\qquad
   g_n(u):=f(\scapro{u}{a_n})\,\left(\abs{\scapro{u}{a_n}}^2\wedge 1\right)
\end{align*}
for $n\in \N\cup\{0\}$. It follows that each $g_n\in L_\nu^1(U,\Borel(U))$ and
\begin{align*}
  \abs{g_n(u)}\le \norm{f}_\infty (1+c)\left(\abs{\scapro{u}{a_0}}^2\wedge 1\right)
\end{align*}
for a constant $c>0$. Lebesgue's theorem on dominated convergence implies that
\begin{align*}
  \lim_{n\to\infty} \int_{U} g_n(u)\,\nu(du)=\int_U g_0(u)\,\nu(du),
\end{align*}
which shows that
\begin{align}\label{eq.auxweak2}
   \left(\abs{s}^2\wedge 1\right)\, (\nu\circ a_n^{-1})(ds)
           \to \left(\abs{s}^2\wedge 1\right)\, (\nu\circ a^{-1}_0)(ds)
            \text{ weakly}.
\end{align}
Lemma \ref{le.continuouscylinfdiv}  implies that $\mu$ is continuous and thus $\mu$
is regular continuous by Lemma \ref{le.regular}.
\end{proof}

A cylindrical L{\'e}vy measure which extends to a $\sigma$-finite measure on $\Borel(U)$
is an obvious candidate to be a L{\'e}vy measure in the usual sense. We recall the
definition from Linde \cite{Linde}: a $\sigma$-finite measure $\nu$ on $\Borel(U)$ is
called a {\em L{\'e}vy measure} if
 \begin{enumerate}
   \item[{\rm (a)}] $\displaystyle \int_{U}\big(\scapro{u}{a}^2\wedge 1\big)\,
   \nu(du)<\infty $ for all $a\in U^\ast$;
   \item[{\rm (b)}] there exists a Radon probability  measure $\mu$ on $\Borel(U)$ with characteristic
   function
   \begin{align}\label{eq.Radonchar}
     \phi_\mu:U^\ast\to \C, \quad
      \phi_\mu(a)=\exp\left(\int_U
      \left(e^{i\scapro{u}{a}}-1-i\scapro{u}{a}\1_{B_U}(u)\right)\,\nu(du)\right).
   \end{align}
 \end{enumerate}
In fact, this is rather a result (Theorem 5.4.8) in Linde \cite{Linde} than his
definition. Note furthermore, that this definition includes already the requirement that
a Radon probability measure on $\Borel(U)$ exists with the corresponding characteristic
function. In general, no conditions on a measure $\nu$ are known which guarantee
that $\nu$ is a L{\'e}vy measure. In particular, the condition
\begin{align*}
  \int_{U}\left(\norm{u}^2\wedge 1\right)\,\nu(du)<\infty
\end{align*}
is sufficient and necessary in Hilbert spaces, but in general spaces it is
even neither sufficient nor necessary, such as in the space of continuous
functions on $[0,1]$, see Araujo \cite{Araujo75}.

\begin{corollary}\label{co.Levymeasure2}
  Let $\nu$ be a $\sigma$-finite measure on $\Borel(U)$ and $h$ be a truncation function.
  Then the following are equivalent:
  \begin{enumerate}
    \item[{\rm (a)}] $\nu$ is a L{\'e}vy measure;
    \item[{\rm (b)}] there exists an infinitely divisible cylindrical probability measure $\mu$ with cylindrical characteristics
    $(d_\nu,0,\nu)_h$ which extends to a Radon measure on $\Borel(U)$.
  \end{enumerate}
In this situation, the Radon probability measure with characteristic function \eqref{eq.Radonchar}
corresponding to the L{\'e}vy measure
$\nu$ coincides with the Radon extension of $\mu$.
\end{corollary}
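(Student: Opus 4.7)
The plan is to prove (a)$\Leftrightarrow$(b) by matching characteristic functions. Under the continuous truncation $h$, the cylindrical characteristics $(d_\nu,0,\nu)_h$ determine via \eqref{eq.charmu} a characteristic function whose exponent is
\begin{align*}
id_\nu(a)+\int_U\psi_h(\scapro{u}{a})\,\nu(du).
\end{align*}
Substituting the definition of $d_\nu$ and combining the two integrals---which is permissible because the drift integral converges absolutely (as already established in the proof of Theorem \ref{th.Levymeasure1}) and $\abs{\psi_h(\scapro{u}{a})}$ is $\nu$-integrable whenever $\nu\circ a^{-1}$ is a L{\'e}vy measure on $\Borel(\R)$ and $\nu(B_U^c)<\infty$---one arrives at the Radon exponent
\begin{align*}
\int_U\bigl(e^{i\scapro{u}{a}}-1-i\scapro{u}{a}\1_{B_U}(u)\bigr)\,\nu(du)
\end{align*}
of \eqref{eq.Radonchar}. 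The equivalence thus reduces to checking the hypotheses of Theorem \ref{th.Levymeasure1} and of Linde's definition in the two directions.

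For (a)$\Rightarrow$(b), Linde's integrability condition $\int_U(\scapro{u}{a}^2\wedge 1)\,\nu(du)<\infty$ rewrites as $\int_\R(s^2\wedge 1)\,(\nu\circ a^{-1})(ds)<\infty$, so $\nu\circ a^{-1}$ is a L{\'e}vy measure on $\Borel(\R)$ and hence $\nu$ is a cylindrical L{\'e}vy measure. Next I would deduce $\nu(B_U^c)<\infty$ from the existence of the Radon infinitely divisible probability measure $\mu_R$ in Linde's condition (b), invoking the standard fact that the L{\'e}vy measure of a Radon infinitely divisible probability measure on a Banach space is finite outside every neighbourhood of the origin. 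Theorem \ref{th.Levymeasure1} then produces an infinitely divisible cylindrical probability measure $\mu$ with characteristics $(d_\nu,0,\nu)_h$, and the integral recombination above gives $\phi_\mu=\phi_{\mu_R}$ on $U^\ast$. Uniqueness of the characteristic function for cylindrical probability measures (Bochner's theorem) then identifies the cylindrical restriction of $\mu_R$ with $\mu$, so $\mu_R$ is the Radon extension of $\mu$.

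For (b)$\Rightarrow$(a), Theorem \ref{th.characteristics} applied to the existing cylindrical measure $\mu$ forces $\nu$ to be a cylindrical L{\'e}vy measure, hence $\int_U(\scapro{u}{a}^2\wedge 1)\,\nu(du)<\infty$, which is Linde's first condition. Running the integral recombination in reverse rewrites $\phi_\mu$ in the form \eqref{eq.Radonchar}, and the Radon extension of $\mu$ assumed in (b) is then a Radon probability measure with exactly this characteristic function, giving Linde's second condition. The final coincidence assertion is immediate: two Radon probability measures on a Banach space with the same characteristic function must agree, so the Radon measure in Linde's definition and the Radon extension of $\mu$ coincide.

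The main technical obstacle is the step $\nu(B_U^c)<\infty$ in the (a)$\Rightarrow$(b) direction, which is not explicit in the stated form of Linde's definition but is indispensable for invoking Theorem \ref{th.Levymeasure1}; once that is secured, the rest of the proof is an integral-rearrangement exercise combined with the uniqueness of characteristic functions.
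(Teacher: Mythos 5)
Your proposal is correct and follows essentially the same route as the paper's proof: the same integral recombination identifying the exponent of $(d_\nu,0,\nu)_h$ with that of \eqref{eq.Radonchar}, the same appeal to Theorem \ref{th.Levymeasure1} for (a)$\Rightarrow$(b), and the same resolution of the key obstacle $\nu(B_U^c)<\infty$ (the paper cites Proposition 5.4.5 in Linde for precisely the ``standard fact'' you invoke). Your write-up merely spells out in more detail the uniqueness-of-characteristic-function arguments that the paper leaves implicit.
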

\begin{proof}
It is easily seen that the characteristic function of the cylindrical measure with cylindrical characteristics
$(d_\nu,0,\nu)_h$ is of the form \eqref{eq.Radonchar}. Consequently, (b) implies (a).  If $\nu$ is a L{\'e}vy measure Proposition 5.4.5 in \cite{Linde} guarantees that
  $\nu(B_U^c)<\infty$. Theorem \ref{th.Levymeasure1} implies that  there exists a cylindrical probability
  measure with cylindrical characteristics $(d_\nu,0,\nu)_h$ which extends to a Radon probability measure because
  its characteristic function is of the form \eqref{eq.Radonchar}.
\end{proof}

\begin{remark}
If $\nu$ is a L{\'e}vy measure and $\mu$ the infinitely divisible Radon probability measure with characteristic function
\eqref{eq.Radonchar}  one calls the triplet $(0,0,\nu)$ the characteristics of $\mu$. However, according to Corollary
\ref{co.Levymeasure2} the measure $\mu$  considered as an infinitely divisible cylindrical
probability measure has the cylindrical characteristics $(d_\nu,0,\nu)_h$. Even if we choose the truncation function as $s\mapsto h(s):=s\1_{B_{\R}}(s)$ the entry $d_\nu$ does not vanish. This asymmetry
illustrates the interaction of the components $p$ and $\nu$ of the  cylindrical
characteristics $(p,0,\nu)$ of an infinitely divisible cylindrical measure.
Even if $\nu$ is a L{\'e}vy measure and $p=d_\nu$ then the function
   \begin{align*}
     a\mapsto \kappa(a):=-\left(\int_U
     \left(e^{-i\scapro{u}{a}}-1-i\scapro{u}{a}\1_{B_U}(u)\right)\,\nu(du)\right)
   \end{align*}
 is negative-definite by Bochner's theorem and the Schoenberg's correspondence. But
 although
 \begin{align*}
   \kappa(a)=-\left(id_\nu (a) + \int_U \psi_h(\scapro{u}{a})\,\nu(du)\right)
 \end{align*}
none of the both summands in this representation respectively are negative-definite in
general.
\end{remark}

In general, the condition (b) in Corollary \ref{co.Levymeasure2} might be verified by
applying Prohorov's theorem, \cite[Th.VI.3.2]{Vaketal}, and proving that
the cylindrical measure $\mu$ is tight. In Sazanov spaces this is simplified:
\begin{remark}
  If $U$ is a Sazanov space then condition (b) in Corollary \ref{co.Levymeasure2} can be
  replaced by:
  \begin{enumerate}
    \item[{\rm (b$^\prime$)}]
\begin{enumerate}
\item[(i)] there exits an infinitely divisible cylindrical probability measure $\mu$ with
cylindrical characteristics $(d_\nu,0,\nu)$;
  \item[(ii)]      $a\mapsto \kappa (a)=-\left(id_{\nu}(a)+\int_U \psi_h(\scapro{u}{a})\,\nu(du)\right)$
              is continuous in an admissible topology.
  \end{enumerate}
  \end{enumerate}
\end{remark}

\begin{example}
If $U$ is a Hilbert space then the Sazanov topology is admissible. If $(d_\nu,0,\nu)_h$
is a cylindrical characteristics the function $\kappa$ is necessarily negative-definite
by Theorem \ref{th.characteristics} and if it is also continuous in the Sazanov topology
one obtains the well known L{\'e}vy-Khintchine formula in Hilbert spaces, see
\cite[Th.6.4.10]{Parth67}.
\end{example}

{\it Acknowledgement} The author thanks Dave Applebaum for some very helpful comments and
discussions.

\bibliographystyle{plain}
\bibliography{cylindricalLevy}


\end{document}